\newtheorem{theorem}{Theorem}
\newtheorem{lemma}[theorem]{Lemma}
\newtheorem{observation}{Observation}
\newtheorem{conjecture}{Conjecture}
\newcommand{\partiale}{\overrightarrow{\partial}_e}
\newcommand{\partialv}{\overrightarrow{\partial}_v}
\begin{document}

\title{The proofs of two directed paths conjectures of Bollob\'as and Leader}
\author{Trevor Pinto\thanks{Supported by an EPSRC doctoral studentship.}\\
\small School of Mathematical Sciences,\\[-0.8ex]
\small Queen Mary University of London,\\[-0.8ex] 
\small London E1 4NS, UK.\\
}
\maketitle

\begin{abstract}
Let $A$ and $B$ be disjoint sets, of size $2^k$,  of vertices of $Q_n$, the $n$-dimensional hypercube. In 1997, Bollob\'as and Leader proved that there must be $(n-k)2^k$ edge-disjoint paths between such $A$ and $B$. They conjectured that when $A$ is a down-set and $B$ is an up-set, these paths may be chosen to be directed (that is, the vertices in the path form a chain). We use a novel type of compression argument to prove stronger versions of these conjectures, namely that the largest number of edge-disjoint paths between a down-set $A$ and an up-set $B$ is the same as the largest number of  directed  edge-disjoint paths between $A$ and $B$. Bollob\'as and Leader made an analogous conjecture for vertex-disjoint paths and we prove a strengthening of this by similar methods. We also prove similar results for all other sizes of $A$ and $B$.
\end{abstract}

\section{Introduction}

The dimension $n$ hypercube, $Q_n$ is one of the most studied objects in combinatorics. It has vertex set $\mathcal{P}[n]$, the power set of $[n]=\{1,\dots n\}$, with an edge linking two vertices, $x$ and $y$ if $|x\triangle y|=1$. Equivalently, there is an edge between $x$ and $y$ if  $x=y\cup \{i\}$, or vice versa, for some $i$. Where convenient, we abbreviate the singleton set $\{i\}$ to $i$. The directed hypercube, $\overrightarrow{Q_n}$ is the directed graph formed by orienting all edges of $Q_n$ from $x$ to $x\cup i$. In other words, we direct all edges towards their larger endpoint. A \emph{directed path} in the hypercube is a path whose vertices form a chain. Equivalently, it is a path in the directed cube $\overrightarrow{Q_n}$.

\subsection{Edge disjoint paths in the cube}

The \emph{edge boundary} of a subset $S$ of $\mathcal{P}[n]$, is written $\partial_e(S)$, the set of $Q_n$-edges with exactly one endpoint in $S$. The \emph{directed edge boundary}, written $\partiale(S)$, is the set of edges in $\partial_e(S)$ with smaller endpoint in $S$.

The Edge Isoperimetric Inequality answers the extremal problem of which sets, of a given size, have smallest edge boundary. To state the theorem, we must define the \emph{binary order}: we let $x<y$ if $\max (x\triangle y)\in y$. Thus for all $k$, the subcube $\mathcal{P}[k]$ is an initial segment of the binary order on $\mathcal{P}[n]$. The Edge Isoperimetric Inequality, proved by Harper \cite{harperedge}, Lindsey \cite{lindsey}, Bernstein \cite{bernstein} and Hart \cite{hart} states that initial segments minimize the size of the edge boundary.

\begin{theorem}[Edge Isoperimetric Inequality]\label{edgeiso}
Let $A\subseteq \mathcal{P}[n]$. Let $I$ be the set of the first $|A|$ elements of $\mathcal{P}[n]$ in the binary order. Then $\left|\partial_e(A)\right|\geq |\partial_e(I)|$. In particular, if $|A|=2^k$, then its edge boundary is larger than that of a $k$-dimensional subcube; i.e. $|\partial_e(A)|\geq (n-k)2^n$.
\end{theorem}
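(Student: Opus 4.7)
I would prove this by induction on $n$, with the base case $n=1$ being an easy check on the four subsets. For the inductive step, split $A \subseteq \mathcal{P}[n]$ according to the last coordinate: let $A_0 = A \cap \mathcal{P}[n-1]$ be the sets in $A$ not containing $n$, and let $A_1 = \{S \setminus \{n\} : S \in A,\; n \in S\} \subseteq \mathcal{P}[n-1]$. Classifying the edges of $Q_n$ by their direction (those along $\{1,\dots,n-1\}$ lie entirely within the bottom or top copy of $Q_{n-1}$, while those along direction $n$ cross between the two copies) yields the identity
$$|\partial_e(A)| \;=\; |\partial_e^{(n-1)}(A_0)| \,+\, |\partial_e^{(n-1)}(A_1)| \,+\, |A_0 \triangle A_1|,$$
where $\partial_e^{(n-1)}$ denotes the edge boundary computed in $Q_{n-1}$.

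Write $f_m(k)$ for the minimum edge boundary of a $k$-element subset of $\mathcal{P}[m]$, so the inductive hypothesis gives $|\partial_e^{(n-1)}(A_j)| \geq f_{n-1}(|A_j|)$. For fixed cardinalities $a=|A_0|$, $b=|A_1|$, the cross-edge count $|A_0 \triangle A_1|$ is bounded below by $|a-b|$, with equality exactly when one of $A_0, A_1$ contains the other. In particular, taking both $A_0$ and $A_1$ to be initial segments in the binary order on $\mathcal{P}[n-1]$ achieves this, and the resulting set $A$ is itself the initial segment of size $a+b$ in the binary order on $\mathcal{P}[n]$ -- because in that order the first $2^{n-1}$ elements are $\mathcal{P}[n-1]$ in its own binary order, followed by the sets $\{S \cup \{n\}: S \in \mathcal{P}[n-1]\}$ in the same order. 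Consequently, the whole theorem reduces to the recursive optimization
$$f_n(m) \;=\; \min_{\substack{a+b=m\\ 0\le a,b\le 2^{n-1}}} \bigl( f_{n-1}(a) + f_{n-1}(b) + |a-b| \bigr),$$
with the minimum attained at the natural split $a=\min(m,2^{n-1})$, $b=m-a$.

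The main obstacle is this discrete optimization, which is essentially a convexity property of $f_{n-1}$. I would handle it by proving, in tandem with the main induction, a local lemma showing that the consecutive differences $\delta(k) = f_{n-1}(k+1) - f_{n-1}(k)$ are sufficiently well-behaved that moving one unit of mass from the larger side to the smaller side never strictly decreases $f_{n-1}(a) + f_{n-1}(b) + |a-b|$; concretely, one needs $\delta(b) - \delta(a-1) \geq 2$ whenever $a > b+1$, which can be read off the explicit staircase form of $f_{n-1}$ on initial segments. For the distinguished case $|A|=2^k$ with $k<n$, the formula then drops out cleanly: the bottom-heavy split $(a,b)=(2^k,0)$ gives boundary $f_{n-1}(2^k) + 2^k = (n-1-k)2^k + 2^k = (n-k)2^k$ by the inductive hypothesis, and the convexity step rules out any strictly cheaper configuration. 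The case $k=n$ is trivial since then $A=\mathcal{P}[n]$ has empty boundary.
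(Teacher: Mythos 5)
The paper offers no proof of Theorem \ref{edgeiso}: it is quoted as a classical result of Harper, Lindsey, Bernstein and Hart, so your argument has to stand on its own. The setup is fine: the decomposition identity $|\partial_e(A)|=|\partial_e^{(n-1)}(A_0)|+|\partial_e^{(n-1)}(A_1)|+|A_0\triangle A_1|$ is correct, $|A_0\triangle A_1|\geq |a-b|$ is correct, and this does reduce the theorem to the inequality $f_{n-1}(a)+f_{n-1}(b)+|a-b|\geq f_n(a+b)$. The gap is that the local lemma you propose for this last step is false. Since $\delta(k)=f_{n-1}(k+1)-f_{n-1}(k)=(n-1)-2s(k)$, where $s(k)$ is the number of ones in the binary expansion of $k$, your condition ``$\delta(b)-\delta(a-1)\geq 2$ whenever $a>b+1$'' amounts to $s(a-1)\geq s(b)+1$, which fails already at $a=5$, $b=3$ (there $s(4)=1$, $s(3)=2$, so $\delta(3)-\delta(4)=-2$). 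Concretely, for $m=8$ in $Q_{n-1}$ with $n-1\geq 3$, the split $(5,3)$ gives objective $8(n-1)-12$ while $(4,4)$ gives $8(n-1)-16$: moving one unit from the larger side to the smaller side \emph{strictly decreases} the objective. Worse, the function $a\mapsto f_{n-1}(a)+f_{n-1}(m-a)+|2a-m|$ is not unimodal: for $m=8$ it equals $8(n-1)-16$ at both $a=4$ and $a=8$ and $8(n-1)-12$ at $a=5,6,7$, so it has two separated global minima with a hump in between, and no single-step exchange argument can certify that the extreme split is globally optimal.

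This non-convexity is precisely the known obstruction to proving the edge-isoperimetric inequality by a naive two-subcube induction, and it is why the standard proofs go another way: either one applies codimension-one compressions in every direction and then directly analyses the very restricted structure of fully compressed sets that are not initial segments, or one expresses $f_n$ via binary digit sums and proves the required superadditivity-type inequality by a global argument. To repair your proof you would need a correct proof of $f_{n-1}(a)+f_{n-1}(b)+|a-b|\geq f_n(a+b)$, which is essentially the entire content of the theorem, so as written the argument has a genuine gap at its central step. (A smaller inaccuracy: the set assembled from two initial segments of sizes $a$ and $b$ is generally not the initial segment of size $a+b$ in $Q_n$ unless $a=\min(m,2^{n-1})$, though this does not affect the lower-bound direction you actually use.)
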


We write $p_e(A,B)$ for the size of the largest collection of edge-disjoint paths between two disjoint subsets of the cube, $A$ and $B$. Similarly, we write $\overrightarrow{p}_e(A,B)$ for the size of the largest collection of edge-disjoint directed paths between disjoint $A$ and $B$. In 1997, Bollob\'as and Leader \cite{bollobasleader}, gave a lower bound on $p_e(A,B)$, in terms of $|A|$ and $|B|$.

\begin{theorem}[Bollob\'as-Leader \cite{bollobasleader}]\label{bledges}
Let $A$ and $B$ be disjoint subsets of $Q_n$, each of size $2^k$, for some non-negative integer $k$. Then there is a family of at least  $(n-k)2^k$  edge-disjoint directed paths from $A$ to $B$.
\end{theorem}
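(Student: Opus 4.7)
The plan is to combine Menger's theorem with the edge isoperimetric inequality (Theorem~\ref{edgeiso}). By Menger's theorem, it suffices to show that any edge set $F \subseteq E(Q_n)$ whose removal leaves no path from $A$ to $B$ satisfies $|F| \geq (n-k)2^k$. Given such an $F$, I would let $C$ be the union of the connected components of $Q_n - F$ that meet $A$: then $A \subseteq C$, $B \cap C = \emptyset$, and $\partial_e(C) \subseteq F$. The problem is thus reduced to the isoperimetric-style bound $|\partial_e(C)| \geq (n-k)2^k$ for every $C$ with $|C| \geq 2^k$ and $|C^c| \geq 2^k$.

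Using the identity $\partial_e(C) = \partial_e(C^c)$, I may assume $|C| \leq 2^{n-1}$, so $2^k \leq |C| \leq 2^{n-1}$. The edge isoperimetric inequality then gives $|\partial_e(C)| \geq |\partial_e(I_{|C|})|$, where $I_m$ denotes the initial segment of size $m$ in the binary order. At the left endpoint $m = 2^k$ the initial segment is the subcube $\mathcal{P}[k]$ whose boundary is exactly $(n-k)2^k$, so the remaining task is to show that $|\partial_e(I_m)| \geq (n-k)2^k$ throughout $[2^k, 2^{n-1}]$.

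For this step I would invoke the uniform bound $|\partial_e(S)| \geq |S|(n - \log_2 |S|)$, valid for every $S \subseteq \mathcal{P}[n]$, which can be proved by induction on $n$: split $Q_n$ along a coordinate and reduce to the concavity estimate $H(t) \geq 2t$ for $t \in [0, 1/2]$, where $H$ denotes the binary entropy. Setting $h(m) = m(n - \log_2 m)$, the function $h$ is concave in $m$, so its minimum on $[2^k, 2^{n-1}]$ is attained at an endpoint. Since $h(2^k) = (n-k)2^k$ and $h(2^{n-1}) = 2^{n-1} \geq (n-k)2^k$ (the latter using the elementary fact $2^{j-1} \geq j$ for $j = n-k \geq 1$), the required bound follows.

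The main obstacle I anticipate is the non-monotonicity of $|\partial_e(I_m)|$ in $m$: one cannot just interpolate between the two endpoints of the range, and the obvious inductive arguments on $n$ do not respect the constraint $|A| = |B| = 2^k$. The continuous estimate $|\partial_e(S)| \geq |S|(n - \log_2 |S|)$ sidesteps this because the bounding function is concave, turning the interior of the range into the easy case. If one preferred to avoid the entropy calculation, an alternative would be to read off $|\partial_e(I_m)|$ directly from the binary expansion of $m$ and verify the bound by explicit case analysis, but this seems appreciably more tedious.
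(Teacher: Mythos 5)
Your proposal is correct and takes essentially the same approach as the paper: the Menger/max-flow reduction to bounding $|\partial_e(S)|$ over all $S$ with $A\subseteq S\subseteq B^c$ is exactly the paper's Lemma~\ref{edgelemma}, and your bound $|\partial_e(S)|\geq |S|(n-\log_2|S|)$ is the function $e(x)$ the paper takes from Chung--F\"uredi--Graham--Seymour, with your concavity-plus-endpoints check on $[2^k,2^{n-1}]$ serving as a slightly cleaner finish than the paper's monotonicity analysis of $e$. One side remark: the word ``directed'' in the statement is evidently a slip --- the theorem concerns undirected paths, as you correctly assume, since for directed paths the claim already fails for $A=\{[n]\}$, $B=\{\emptyset\}$.
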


It is easy to see that this is best possible. Indeed, $p_e(A,B)$ is bounded above by $|\partial_e(A)|$  and when $A$ is a $k$-dimensional subcube, this is precisely $(n-k)2^k$.

Theorem \ref{bledges} is a special case of Bollob\'as and Leader's full result, which gives a lower bound for each pair of values of $|A|$ and $|B|$. This full result is stated in Section 3, together with some related discussion. For now, we write $BL_e(|A|,|B|)$ for the lower bound they gave for $p_e(A,B)$. This bound that they proved is not simply the minimum of the edge boundaries of initial segments of size $|A|$ and $|B|$, indeed, that is not a lower bound to $p_e(A,B)$.

A \emph{down-set} is a subset, $A$, of $Q_n$ such that if $x\in A$ and $y\subseteq x$, then $x\in Q_n$. An \emph{up-set} is the complement of a down-set.  Bollob\'as and Leader \cite{bollobasleader} asked if one can require the paths between up-sets and down-sets to be directed, and keep the same bounds. More precisely they proposed the following.

\begin{conjecture}
Let $A$, a down-set, and $B$, an up-set, be disjoint non-empty subsets of $\mathcal{P}[n]$. Then $\overrightarrow{p}_e(A,B)\geq BL_e(|A|,|B|)$. In particular, if $|A|=|B|=2^k,$ then $\overrightarrow{p}_e(A,B)\geq (n-k)2^k$.
\end{conjecture}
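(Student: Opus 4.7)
The plan is to prove the stronger statement announced in the abstract: for any down-set $A$ and up-set $B$ in $\mathcal{P}[n]$ (disjoint and non-empty), one has $\overrightarrow{p}_e(A,B) = p_e(A,B)$. The conjecture then follows immediately from Theorem~\ref{bledges}.

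By Menger's theorem in both the undirected and directed settings,
\begin{equation*}
p_e(A,B) = \min_S |\partial_e(S)|, \qquad \overrightarrow{p}_e(A,B) = \min_S |\partiale(S)|,
\end{equation*}
where both minima are taken over \emph{eligible} $S$, meaning $A \subseteq S$ and $S \cap B = \emptyset$. Since $\partiale(S) \subseteq \partial_e(S)$ for every $S$, the inequality $\overrightarrow{p}_e \leq p_e$ is immediate. The content lies in the reverse inequality, which I would reduce to the following key claim: \emph{the minimum of $|\partiale(S)|$ over eligible $S$ is attained at some down-set $S^\star$}. Indeed, for a down-set $S^\star$ every boundary edge is up-going, so $|\partiale(S^\star)| = |\partial_e(S^\star)|$, whence $\overrightarrow{p}_e(A,B) = |\partiale(S^\star)| = |\partial_e(S^\star)| \geq p_e(A,B)$.

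For the key claim I would first attempt the standard down-compression $C_i$, which replaces each $x \in S$ having $i \in x$ and $x \setminus i \notin S$ by $x \setminus i$. With $A$ a down-set and $B$ an up-set, $C_i$ does preserve eligibility: no $x \in A$ is displaced since $x \in A$ with $i \in x$ has $x \setminus i \in A \subseteq S$; and a newly added element $x \setminus i$ cannot lie in $B$, as $B \ni x \setminus i \subseteq x$ would force $x \in B$, contradicting $x \in S$. Iterating over all $i$ terminates in a down-set. The obstacle --- which is presumably what the ``novel compression argument'' advertised in the abstract is designed to address --- is that while $C_i$ does not increase the \emph{undirected} boundary $|\partial_e|$, it can strictly increase the \emph{directed} boundary $|\partiale|$ in small examples. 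Hence one needs a refined compression, tailored to the directed boundary, which preserves eligibility, produces a down-set, and does not increase $|\partiale|$. The construction and verification of such a compression --- balancing the directed edges created against those destroyed at each step, uniformly in $S$ --- is the heart of the proof.
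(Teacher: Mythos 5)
Your reduction is the same as the paper's: Menger/Max-Flow-Min-Cut gives $p_e(A,B)=\min\{|\partial_e(S)|: A\subseteq S\subseteq B^c\}$ and $\overrightarrow{p}_e(A,B)=\min\{|\partiale(S)|: A\subseteq S\subseteq B^c\}$, and everything comes down to showing that the directed minimum is attained by a down-set. You also correctly diagnose the obstacle, namely that a single down-compression controls $|\partial_e|$ but can increase $|\partiale|$. However, the proposal stops precisely at the point where the actual work begins: you state that one ``needs a refined compression'' with the required properties and call its construction ``the heart of the proof'' without supplying it. As written, this is a correct reduction together with an acknowledged gap, not a proof of the conjecture.

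The idea you are missing is that the paper does \emph{not} construct a single compression that always works; it uses a \emph{pair} of operators and shows that for every $S$ at least one of them works. Define $C_i(S)=\{x\in S: x\setminus\{i\}\in S\}$ (delete the elements of $S$ whose lower $i$-neighbour is missing) and $D_i(S)=S\cup\{x: x\cup\{i\}\in S\}$ (add those missing lower neighbours). Neither preserves $|S|$, which is harmless because only the containments $A\subseteq S\subseteq B^c$ need to be maintained, and both operators maintain them when $A$ is a down-set and $B$ an up-set. Partitioning $\mathcal{P}([n]\setminus\{i\})$ into classes $T,U,V,W$ according to which of $x$ and $x\cup\{i\}$ lie in $S$, one computes the symmetric differences $\partiale(S)\,\triangle\,\partiale(D_i(S))$ and $\partiale(S)\,\triangle\,\partiale(C_i(S))$ explicitly in terms of edges between these classes; comparing the two yields $|\partiale(S)|\geq \tfrac{1}{2}\bigl(|\partiale(C_i(S))|+|\partiale(D_i(S))|\bigr)$, so at least one of the two operators does not increase the directed edge boundary. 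Finally, both operators produce $i$-down sets and preserve $j$-downness for every $j$, so applying the good one of $C_i,D_i$ successively for $i=1,\dots,n$ terminates in a down-set $S^\star$ with $|\partiale(S^\star)|\leq|\partiale(S)|$, which is exactly your key claim. Without this two-operator averaging step (or some substitute for it), the argument is incomplete.
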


See also \cite{blproblempaper} for a brief description of their conjecture, submitted as an open problem to the British Combinatorial Conference.

In Section 2.1, we prove a strengthened version of the conjecture, that is essentially best possible:

\begin{theorem}\label{diredges}
Suppose $A$ and $B$ are disjoint subsets of $Q_n$, where $A$ is a down-set, and $B$ is an up-set. Then there are the same number of  edge-disjoint paths from $A$ to $B$ as edge-disjoint directed paths, i.e. $\overrightarrow{p}_e(A,B)= p_e(A,B)$.
\end{theorem}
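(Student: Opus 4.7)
The inequality $\overrightarrow{p}_e(A,B) \le p_e(A,B)$ is immediate since every directed path is a path; I describe my plan for the reverse. By Menger's theorem applied in the undirected graph $Q_n$ and in $\overrightarrow{Q_n}$,
\[
p_e(A,B) = \min_{A \subseteq S,\, B \cap S = \emptyset} |\partial_e(S)|, \qquad \overrightarrow{p}_e(A,B) = \min_{A \subseteq S,\, B \cap S = \emptyset} |\partiale(S)|,
\]
so it suffices to show that these two minima coincide.

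A basic observation is that on a down-set $S$ every boundary edge is oriented outward, so $|\partial_e(S)| = |\partiale(S)|$. A preliminary step shows that the minimum of $|\partial_e(S)|$ over valid $S$ is attained at a down-set: starting from any minimizing $S$, the iterated $i$-compression (replacing $x \cup \{i\}$ by $x$ whenever only $x \cup \{i\}$ is in $S$) preserves $A \subseteq S$ because $A$ is a down-set, preserves $B \cap S = \emptyset$ because $B$ is an up-set, and does not increase $|\partial_e(S)|$ by the classical edge-isoperimetric lemma. The result is a down-set $S^{\ast}$ with $|\partiale(S^{\ast})| = |\partial_e(S^{\ast})| = p_e(A,B)$, re-establishing the trivial inequality $\overrightarrow{p}_e \le p_e$ by cut-compression but yielding nothing stronger — indeed a naive attempt to compare $|\partiale(S)|$ directly to the boundary of the largest down-set contained in $S$ runs into counterexamples.

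The genuine task is $\overrightarrow{p}_e \ge p_e$, that is, to exhibit $p_e$ edge-disjoint directed paths. My plan is to apply a compression argument not to the cut but to the family of paths itself (equivalently, to the supporting flow). Starting from any family of $p_e$ edge-disjoint undirected paths from $A$ to $B$, and for each coordinate $i$, I would perform swap operations inside the $2$-dimensional subcubes spanned by $i$ and one other coordinate, converting "downward" traversals of $i$-edges into "upward" traversals. The down-set/up-set structure of $A$ and $B$ grants the additional freedom to replace the starting vertex of a path by any comparable element of $A$ and its endpoint by any comparable element of $B$, which should give enough room for the local swaps to succeed. After enough iterations the family should consist entirely of directed paths, still edge-disjoint and of size $p_e$. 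The main obstacle, and presumably the "novel type of compression" referred to in the abstract, is defining these swaps globally and simultaneously so that edge-disjointness is preserved at every step and the procedure provably terminates with no loss of paths; local swaps in one $2$-dimensional subcube can interfere with edges used elsewhere, so some coordinated choice of swaps — respecting a canonical ordering of paths, coordinates, and endpoints — will be needed.
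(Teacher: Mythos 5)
Your reduction to cuts is exactly the paper's Lemma \ref{edgelemma}, and you correctly isolate the hard direction: showing $\min\{|\partiale(S)| : A\subseteq S\subseteq B^c\} \geq \min\{|\partial_e(S)| : A\subseteq S\subseteq B^c\}$. But your proposed way of closing that gap --- compressing the family of paths (equivalently the flow) by local swaps inside $2$-dimensional subcubes --- is only a plan, and the difficulty you yourself flag (making the swaps globally consistent, preserving edge-disjointness, and proving termination without losing paths) is the entire content of the theorem. Nothing in the proposal resolves it, so as written there is no proof of $\overrightarrow{p}_e(A,B)\geq p_e(A,B)$.

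The paper never leaves the cut side. It proves (Theorem \ref{diredgeiso}) that the minimum of $|\partiale(S)|$ over $A\subseteq S\subseteq B^c$ is attained by a down-set $S'$, whence $\min|\partiale| = |\partiale(S')| = |\partial_e(S')| \geq \min|\partial_e|$, and equality follows from the trivial inequality. Your remark that comparing $|\partiale(S)|$ with a single compressed set ``runs into counterexamples'' is the right observation, but you abandoned this route one step too early: the fix is to use \emph{two} compressions, $C_i(S)=\{x\in S: x\setminus\{i\}\in S\}$ and $D_i(S)=S\cup\{x: x\cup\{i\}\in S\}$. Neither need decrease the directed edge boundary on its own, but a direct count of the edges gained and lost under each shows
\[
|\partiale(S)| \;\geq\; \tfrac{1}{2}\Bigl(|\partiale(C_i(S))|+|\partiale(D_i(S))|\Bigr),
\]
so at least one of the two does not increase it; both keep $A\subseteq\cdot\subseteq B^c$ when $A$ is a down-set and $B$ an up-set (Observation \ref{containments}), and both preserve any $j$-downness already achieved (Lemma \ref{preservesdownness}), so iterating over $i=1,\dots,n$ terminates in a down-set. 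If you want to salvage your own plan you would need to supply the coordinated swapping argument in full; the cut-side argument above is substantially shorter and is the ``novel compression'' the abstract refers to.
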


Bollob\'as and Leader \cite{bollobasleader} use flow theorems (see for instance Chapter 3 of \cite{bollobas} for a good introduction to the topic) to demonstrate a relationship between edge-disjoint paths in the cube and edge-boundaries of subsets, and implicitly showed a directed version of this. More precisely they showed:

\begin{lemma}\label{edgelemma}
For all disjoint $non-empty$ subsets of $Q_n$, $A$ and $B$, $p_e(A,B)=\min\{|\partial_e(S)|: A\subseteq S\subseteq B^c\}$. If additionally $A$ is a down-set and $B$ is an up-set then $\overrightarrow{p}_e(A,B)=\min\{\partiale(S): A\subseteq S\subseteq B^c\}$.
\end{lemma}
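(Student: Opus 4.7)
The plan is to derive both formulas from the integer max-flow min-cut theorem applied to a suitably augmented flow network. Adjoin to the cube a super-source $s$ joined by arcs of infinite capacity to each $a\in A$, and a super-sink $t$ receiving arcs of infinite capacity from each $b\in B$. For the first claim, model $Q_n$ by replacing each edge with a pair of anti-parallel arcs of capacity $1$; for the second, use $\overrightarrow{Q_n}$ directly, with each arc of capacity $1$.

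To identify the left-hand side of each formula with the value of the maximum integer $s$-$t$ flow, I would apply standard flow decomposition: any integer flow of value $k$ splits into $k$ edge-disjoint unit $s$-$t$ paths, each of the form $s\to a\to\cdots\to b\to t$, whose interiors are edge-disjoint paths (or directed paths) between $A$ and $B$. Conversely, an edge-disjoint family of such $A$-$B$ paths routes one unit of flow along each path and yields an integer $s$-$t$ flow. In the undirected setting one must also check that the extracted paths are edge-disjoint in $Q_n$ rather than merely using opposite orientations of a shared arc; if the flow ever carries positive weight on both orientations of a single cube edge, one may cancel the common part without decreasing the flow value, so one may assume the maximum flow uses at most one orientation of each edge.

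To identify the right-hand side with the minimum cut capacity, observe that in either network a finite-capacity $s$-$t$ cut $(S', V\setminus S')$ with $s\in S'$ and $t\notin S'$ cannot let any infinite-capacity arc cross it, which forces $A\subseteq S\subseteq B^c$ where $S:=S'\cap\mathcal{P}[n]$; conversely every such $S$ yields a finite cut $S\cup\{s\}$. Its capacity equals $|\partial_e(S)|$ in the undirected model (one unit per cube edge with exactly one end in $S$) and $|\partiale(S)|$ in the directed model (only arcs of $\overrightarrow{Q_n}$ pointing out of $S$ contribute). Both equalities then follow from max-flow min-cut. The only non-routine point is the cancellation step in the undirected case, which guarantees that extracted paths are genuinely edge-disjoint in $Q_n$; everything else is a translation between flows, cuts, and families of paths, and I expect no real obstacle.
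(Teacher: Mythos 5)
Your proposal is correct and follows essentially the same route as the paper: both reduce each identity to the Max-Flow-Min-Cut Theorem with integrality (the paper uses multiple sources and sinks where you use a super-source and super-sink, an immaterial difference), and identify minimal cuts with boundaries $\partial_e(S)$ or $\overrightarrow{\partial}_e(S)$ of sets $S$ with $A\subseteq S\subseteq B^c$. Your explicit treatment of the anti-parallel-arc cancellation in the undirected model is a detail the paper leaves implicit, but it is not a different argument.
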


We give Bollob\'as and Leader's proof of this lemma in Section 3.1.

Easily, this lemma allows us to deduce Theorem \ref{diredges} from the following directed version of the edge isoperimetric inequality, which we prove in Section 2.1 using an unusual compression argument. Roughly speaking, we define two different compression operators, neither of which always reduces the size of the directed edge boundary of a set, but we show that for each set at least one of them does.

\begin{theorem}\label{diredgeiso}
Let ${A}$ be an up-set and ${B}$ be a disjoint down-set, both non-empty subsets of $Q_n$. Then $\min\Big\{|\partiale(S)|: A\subseteq S\subseteq B^c\Big\}$ is attained by a down set. Thus $\min\Big\{\partiale(S):A\subseteq S\subseteq B^c\Big\}=\min\{\partial_e(S): A\subseteq S\subseteq B^c\}$.
\end{theorem}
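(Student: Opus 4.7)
The plan is to reduce the directed isoperimetric problem to the undirected one. Specifically, I will show that the minimum of $|\partiale(S)|$ over admissible $S$ (meaning $A \subseteq S \subseteq B^c$) is attained by a down-set. Once this is established, the second equality in the statement is immediate: for any down-set $T$, every edge of $\partial_e(T)$ is oriented outward, so $|\partiale(T)| = |\partial_e(T)|$, and combined with the trivial $|\partiale(S)| \le |\partial_e(S)|$ for arbitrary $S$ this pins the two minima together.

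To produce a down-set attaining the minimum, I set up an iterative compression process. For each coordinate $i \in [n]$, consider two moves one may apply to $S$: the \emph{shift} $C_i$, which for every pair $\{x, x \cup i\}$ with $x \notin S$ and $x \cup i \in S$ replaces $x \cup i$ by $x$; and the \emph{deletion} $D_i$, which simply removes every such $x \cup i$ from $S$. A brief check confirms that both $C_i(S)$ and $D_i(S)$ remain admissible, using that $A$ is a down-set (so no element forced in by $A$ is ever removed) and $B$ an up-set (so no element forbidden by $B$ is ever introduced). Both operators also strictly decrease the potential $\Phi(S) = \sum_i |\{x : x \notin S,\ x \cup i \in S\}|$, which vanishes precisely when $S$ is a down-set; hence iterating terminates at an admissible down-set in finitely many steps.

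The heart of the matter is the local claim: for any admissible $S$ that is not already a down-set, there is a coordinate $i$ for which at least one of $C_i(S), D_i(S)$ has $|\partiale| \le |\partiale(S)|$. I would establish this by decomposing the change in $|\partiale|$ under each operator as a sum over the two-dimensional $\{i, k\}$-faces $\{y, y \cup i, y \cup k, y \cup \{i, k\}\}$ of the cube (one per $k \neq i$), and running a case analysis on the sixteen possible intersection patterns of such a face with $S$. The point is that $C_i$ can only hurt via direction-$i$ edges, contributing one bad unit per pair in state $01$ (i.e.\ $y \notin S$, $y \cup i \in S$), while being neutral or helpful on direction-$k$ edges; dually, $D_i$ leaves direction-$i$ edges untouched and can only hurt via a specific $(11, 01)$ two-face pattern ($y, y \cup i \in S$, $y \cup k \notin S$, $y \cup \{i, k\} \in S$). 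Since the ``bad'' contributions for the two operators are driven by different first-pair states in direction $i$ ($01$ versus $11$), they are supported on disjoint configurations, and one argues that the good contributions of at least one of the two operators outweigh its own bad ones.

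The main obstacle is precisely this case analysis, together with the accompanying choice of the coordinate $i$ along which to compress. I would expect to pick $i$ in some adaptive way — for instance, maximising the number of $01$-pairs in direction $i$, or averaging over all coordinates — so that the sums of good contributions provably dominate the sums of bad ones in at least one operator. Should the clean $(C_i, D_i)$ dichotomy fall short in some configuration, a hybrid move that applies $C_i$ to some $01$-pairs and $D_i$ to the rest, chosen locally to avoid both sources of damage, should plug the remaining gap.
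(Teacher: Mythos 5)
Your overall strategy --- two competing per-coordinate compressions, at least one of which should not increase $\big|\partiale\big|$ --- is the right family of argument, but the specific pair of operators you chose does not satisfy the dichotomy, and the central lemma is left unproven. The paper's two operators are ``delete the top of every bad pair'' (your deletion, the paper's $C_i$) and ``\emph{add the bottom} of every bad pair'' ($D_i(S)=S\cup\{x:x\cup\{i\}\in S\}$); your second operator, the shift, is the composition of these two and inherits the defects of both. With the paper's pair no case analysis and no choice of $i$ is needed: the edges lost/gained by ``add the bottom'' are, up to the bijection $x\mapsto x\cup\{i\}$, the same as the edges gained/lost by ``delete the top'', except that the term counted against the deletion runs over a strictly larger set; hence $\big|\partiale(S)\big|-\big|\partiale(D_i(S))\big|\geq\big|\partiale(C_i(S))\big|-\big|\partiale(S)\big|$ for \emph{every} $i$, which is the whole lemma. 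Your shift, by contrast, picks up an extra $+|V|$ on the direction-$i$ edges (each pair in state $01$ becomes $10$, turning a non-boundary pair into an upward boundary pair), and your claim that it is ``neutral or helpful on direction-$k$ edges'' is false: moving an element down can increase its out-degree to non-members (e.g.\ shifting $\{3\}$ down to $\emptyset$ in $S=\{\{3\},\{1,2\}\}$ raises $\big|\partiale\big|$ from $3$ to $4$). A concrete failure of your dichotomy: for $S=\{\emptyset,\{2\},\{1,2\}\}$ in $Q_2$ and $i=2$ we have $\big|\partiale(S)\big|=1$, your shift gives $\{\emptyset,\{1\},\{2\}\}$ with boundary $2$, your deletion gives $\{\emptyset,\{2\}\}$ with boundary $2$, while the paper's $D_2$ gives the full cube with boundary $0$; since direction $1$ has no $01$-pairs, no coordinate and no hybrid of your two moves helps for this $S$. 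So everything in your write-up rests on the adaptive choice of $i$ and the unspecified ``hybrid move'', i.e.\ on the sixteen-case analysis that is announced but never carried out, guided by heuristics that are incorrect as stated. That is a genuine gap at the central step.

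Two further issues. Your termination potential $\Phi$ need not decrease under the deletion: removing $x\cup\{i\}$ kills the bad $i$-pair but creates a new bad $k$-pair $(x\cup\{i\},\,x\cup\{i,k\})$ for every $k$ with $x\cup\{i,k\}\in S$ (deleting $\{1\}$ from $\{\{1\},\{2\},\{3\},\{1,2\},\{1,3\}\}$ raises $\Phi$ from $3$ to $4$). This is repairable by using $\sum_{x\in S}|x|$ instead, but the paper sidesteps iteration-to-a-fixed-point entirely: its Lemma~\ref{preservesdownness} shows both operators preserve $j$-downness for already-processed coordinates, so a single pass $i=1,\dots,n$ lands on a down-set. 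Finally, your reduction of the second assertion of the theorem to the first, and the admissibility check $A\subseteq C_i(S),D_i(S)\subseteq B^c$, are fine and match Observation~\ref{containments}.
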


\subsection{Vertex disjoint paths in the cube}

The \emph{vertex boundary} of $S$, written $\partial_v(S)$, is the set of vertices in $S^c$ adjacent to a vertex in $S$. In other words, $\partial_v(S)=\{x\in S^c: d(x,y)=1, \text{ for some } y\in S\}$, where $d(x,y)$ is the usual graph distance. The \emph{directed vertex boundary} of $S$, written $\partialv(S)$, is the set of vertices in $\partial_v(S)$ in $S^c$, with a smaller neighbour in $S$. 

The \emph{simplicial order} is defined by letting $x<y$ if either $|x|<|y|$ or if both $|x|=|y|$ and $x$ precedes $y$ in the lexicographic order, i.e. $\min(x\triangle y)\in x$. Note that for all $k$, the set $[n]^{(\leq k)}:=\{x\in \mathcal{P}[n]: |x|\leq k\}$ is an initial segment of simplicial order.

\begin{theorem}[Vertex Isoperimetric Inequality]\label{vertexiso}
Let $A\subseteq \mathcal{P}[n]$. Let $I$ be the set of the first $|A|$ vertices of $Q_n$ in the simplicial order. Then $|\partial_v(A)|\geq |\partial_v(I)|$.
\end{theorem}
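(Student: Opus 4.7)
The plan is to follow Harper's classical compression-and-induction argument, proceeding by induction on $n$, with the base case $n=1$ being trivial. For the inductive step, fix the coordinate $n$ and decompose $A = A_0 \sqcup \{x \cup \{n\} : x \in A_1\}$ where $A_0, A_1 \subseteq \mathcal{P}([n-1])$. A direct count, splitting $\partial_v(A)$ according to whether its elements contain $n$, gives
\[
|\partial_v(A)| = \bigl|(\partial_v(A_0)\cup A_1)\setminus A_0\bigr| + \bigl|(\partial_v(A_1)\cup A_0)\setminus A_1\bigr|,
\]
where the boundaries on the right are taken in $Q_{n-1}$.

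Next I would apply the inductive hypothesis to each slice, replacing $A_0$ and $A_1$ with the initial segments of the simplicial order on $\mathcal{P}([n-1])$ of the same respective sizes, and then (swapping the two slices if necessary, which amounts to reflecting through coordinate $n$) arranging that $|A_1| \leq |A_0|$, so $A_1 \subseteq A_0$. The crucial structural fact here is that for any initial segment $I$ of the simplicial order on $\mathcal{P}([n-1])$, the set $I \cup \partial_v(I)$ is again an initial segment; iterating this shows that nested initial segments have nested "closures", whence $\partial_v(A_1) \subseteq A_0 \cup \partial_v(A_0)$. The boundary formula then simplifies to
\[
|\partial_v(A)| = |\partial_v(A_0)| + |A_0 \setminus A_1| + |\partial_v(A_1)\setminus A_0|,
\]
and one verifies that the compressions above never increase this quantity.

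Finally I would carry out the one-variable optimization: among decompositions $|A_0| + |A_1| = |A|$ with $A_0, A_1$ nested initial segments of simplicial order on $\mathcal{P}([n-1])$, the displayed quantity is minimized precisely when $A$ itself is the initial segment of simplicial order of size $|A|$ in $\mathcal{P}[n]$. This step is the main obstacle. It requires a case analysis around the level transitions $m = \sum_{j<k}\binom{n-1}{j}$, exploiting the (non-monotone) staircase behavior of $m \mapsto |\partial_v(I_m)|$ together with the monotonicity of $m \mapsto |I_m \cup \partial_v(I_m)|$, to show that any imbalanced split can be improved by rebalancing until $(|A_0|,|A_1|)$ matches the split dictated by the simplicial order on $\mathcal{P}[n]$. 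Once this casework is in place, the induction closes and the theorem follows.
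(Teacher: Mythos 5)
First, a point of reference: the paper does not prove Theorem \ref{vertexiso} at all. It is Harper's classical vertex isoperimetric inequality, stated as background and attributed to \cite{harpervertex}; the paper's own compression machinery is aimed at the \emph{directed} boundary statements (Theorems \ref{diredgeiso} and \ref{dirvertexiso}), not at this one. So there is no in-paper argument to compare yours against, and your proposal has to stand on its own as a proof of Harper's theorem.

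Your setup is correct as far as it goes: the two-slice decomposition of $\partial_v(A)$ is right; the replacement of $A_0$ and $A_1$ by simplicial initial segments is legitimate, provided you note that the inductive hypothesis is needed in two forms (both $|\partial_v(I)|\leq|\partial_v(A_j)|$ and $|I\cup\partial_v(I)|\leq|A_j\cup\partial_v(A_j)|$) and that $I\cup\partial_v(I)$ is again an initial segment (equivalently, the upper shadow of a lexicographic initial segment of a level is again one); and the simplified formula for nested initial segments checks out. The genuine gap is the step you yourself call ``the main obstacle.'' Showing that for every split $|A_0|+|A_1|=|A|$ into nested simplicial initial segments the quantity $|\partial_v(A_0)|+|A_0\setminus A_1|+|\partial_v(A_1)\setminus A_0|$ is at least $|\partial_v(I_{|A|})|$ is not a routine one-variable optimization: because $m\mapsto|\partial_v(I_m)|$ is a non-monotone sawtooth, there is no evident potential function under which ``rebalancing'' a split is always an improvement, and the assertion that any imbalanced split can be improved toward the simplicial split is essentially the full content of the theorem restated. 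This is precisely why the standard compression proofs avoid this route altogether: they apply codimension-one compressions in all $n$ directions, iterate to a fixed point, and then classify the fully compressed sets that fail to be initial segments (a short exceptional family), rather than optimizing a single split. As written, your proposal defers all of the real difficulty to an unexecuted case analysis, so it is a plausible plan but not yet a proof; either carry out that case analysis in full or switch to the all-directions compression argument.
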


We write $p_v(A,B)$ for the size of the largest collection of paths with vertex-disjoint interiors, between two disjoint subsets of the cube, $A$ and $B$. Similarly, we write $\overrightarrow{p}_v(A,B)$ for the size of the largest collection of directed paths between $A$ and $B$ that have vertex-disjoint interiors. Just as for the edge-disjoint case,  Bollob\'as and Leader, \cite{bollobasleader}, gave a lower bound on $p_v(A,B)$ in terms of $|A|$ and $|B|$. Their full theorem is given and discussed in Section 3.2, below is the special case Bollob\'as and Leader were most interested in.

\begin{theorem}[Bollob\'as-Leader \cite{bollobasleader}]\label{blvertices}
Let $A$ and $B$ be disjoint non-empty subsets of $Q_n$, with $|A|=|B|=\sum_{i=0}^k \binom{n}{k}$. Then $p_v(A,B)\geq \binom{n}{k+1}$. 
\end{theorem}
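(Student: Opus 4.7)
The plan is to follow Bollob\'as and Leader's strategy for Theorem~\ref{bledges}, with the Vertex Isoperimetric Inequality replacing the edge version. First I would derive a vertex analogue of Lemma~\ref{edgelemma} via Menger's theorem: after separately accounting for direct $A$--$B$ edges (which give trivial single-edge paths with empty interior),
\[
p_v(A,B) \;=\; \min\bigl\{|\partial_v(S)|:\, A\subseteq S,\ (S\cup\partial_v(S))\cap B=\emptyset\bigr\}.
\]
Any minimal separator of $A$ from $B$ inside $V(Q_n)\setminus (A\cup B)$ arises as $\partial_v(S)$, for $S$ the set of vertices reachable from $A$ after the separator's deletion; conversely, any such $S$ yields a valid separator.

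Given this identity, I would apply the Vertex Isoperimetric Inequality (Theorem~\ref{vertexiso}) to get $|\partial_v(S)|\ge |\partial_v(I_m)|$, where $I_m$ is the initial simplicial segment of size $m=|S|$. A key strengthening of Harper's theorem is that $I_m\cup \partial_v(I_m)$ is itself an initial simplicial segment; this converts the side condition $|S\cup\partial_v(S)|\le 2^n-|B|=\sum_{i=0}^{n-k-1}\binom{n}{i}$ into an upper bound on $m$. Writing $m=\sum_{i=0}^{j}\binom{n}{i}+r$ with $0\le r<\binom{n}{j+1}$, the two constraints $m\ge|A|$ and the neighbourhood cap force $j\in\{k,\dots,n-k-2\}$, with $r$ additionally restricted as $j$ approaches $n-k-2$.

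It remains to show $|\partial_v(I_m)|\ge\binom{n}{k+1}$ throughout this range. At a level boundary ($r=0$) with $j\in\{k,\dots,n-k-2\}$, $|\partial_v(I_m)|=\binom{n}{j+1}$, which is $\ge\binom{n}{k+1}$ because $j+1\in[k+1,n-k-1]$ and $\binom{n}{\cdot}$ is unimodal about $n/2$. For $r>0$, the initial segment is $I_m=[n]^{(\le j)}\cup F$ with $F$ the first $r$ sets of $[n]^{(j+1)}$ in lex order, and a direct calculation gives
\[
\partial_v(I_m) = \bigl([n]^{(j+1)}\setminus F\bigr) \cup U,
\]
where $U\subseteq [n]^{(j+2)}$ consists of those $(j+2)$-sets that contain some member of $F$; its size is $\binom{n}{j+1}-r+|U|$. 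A Kruskal--Katona-style lower bound on $|U|$, coupled with the tightening of the neighbourhood cap on $r$ as $j$ grows, should be enough to keep the total at least $\binom{n}{k+1}$. I expect the main technical obstacle to be exactly this within-level shadow estimate near the upper end of the permissible $j$-range, where the bookkeeping between $r$, $|U|$, and the cap becomes delicate; once it is in place, the unimodality argument yields the desired conclusion.
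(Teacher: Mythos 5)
Your reduction to a constrained isoperimetric problem has a genuine gap, and it sits exactly where the paper's proof of the full version (Theorem \ref{blfullvertices}) has to work hardest. The identity $p_v(A,B)=\min\{|\partial_v(S)|:A\subseteq S,\ (S\cup\partial_v(S))\cap B=\emptyset\}$ is not what Menger's theorem delivers once $A$ and $B$ are adjacent. After deleting the set $F$ of $A$--$B$ edges and taking a minimum separator $C$ in $Q_n-F$, the reachable set $S\supseteq A$ satisfies only $\partial_v(S)\subseteq C\cup B_1$, where $B_1$ is the set of vertices of $B$ with a neighbour in $A$: such vertices can lie in the $Q_n$-boundary of $S$ because they are joined to $S$ only through deleted edges. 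Consequently the admissible family in your minimum can be empty (already for $A=\{\emptyset\}$, $B=\{\{1\}\}$ in $Q_2$, where $p_v=2$), and ``separately accounting'' for the direct edges only yields the weaker bound $p_v(A,B)\ge\min\{|\partial_v(S)|:A\subseteq S\subseteq B^c\}$ with no neighbourhood cap. That weaker minimum is genuinely too small: taking $B=[n]^{(\ge n-k)}$ and $S=B^c$ gives $|\partial_v(S)|=\binom{n}{k}<\binom{n}{k+1}$. So the cap $|S\cup\partial_v(S)|\le 2^n-|B|$, which is the engine of your upper bound on $m$ and hence of the whole unimodality argument, cannot be imposed in general.

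The paper circumvents this by keeping both sides of the cut in play: it takes $A'$ and $B'$ to be the two sides of $G-C$, assumes without loss of generality that $|A'|\le|B'|$ so that $|A'|\le 2^{n-1}$, and uses the near-monotonicity of the interpolated bound $b(x)$ together with $\partial_v(A')\subseteq C\cup B_1$ and $|F|\ge|B_1|$; the one residual case ($n$ even, both $|A'|$ and $|B'|$ just above $\sum_{i=0}^{n/2-1}\binom{n}{i}$) is settled by Lov\'asz's form of the Kruskal--Katona theorem plus explicit bookkeeping of $|B_1|$. That residual case is precisely the regime in which your neighbourhood cap would be vacuous or would exclude every $S$. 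Separately, even in the non-adjacent case where your reduction is sound, the within-level estimate you defer --- showing $\binom{n}{j+1}-r+|U|\ge\binom{n}{k+1}$ when $j+1$ exceeds $n/2$ and the upper shadow starts to shrink --- is the other half of the same difficulty and is not routine; as written the proposal does not establish the theorem.
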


It is easy to see that this is essentially best possible, since if $A$ and $B$ are non-adjacent, every path from $A$ to $B$ must have one vertex in $\partial_v(A)$. If $A$ is the set of elements of weight at most $k$, sometimes written $[n]^{(\leq k)}$, then $\partial_v(A)$ is precisely the bound given in the Theorem.

We write $BL_v(|A|,|B|)$, for the lower bound given by Bollob\'as and Leader for $p_v(A,B)$. As in the edge case, this lower bound is not simply the isoperimetric bound- i.e. it is not the minimum of the vertex boundaries of initial segments of size $|A|$ and $|B|$. Indeed, they show that is not a lower bound for $p_v(A,B)$.

Bollob\'as and Leader also proposed a directed version of Theorem \ref{blvertices}, conjecturing that their bounds hold even for directed paths between up-sets and down-sets:

\begin{conjecture}
Let $A$ and $B$ be disjoint non-empty subsets of $Q_n$. Then $\overrightarrow{p}_v(A,B)\geq BL_v(|A|,|B|)$. In particular, if $|A|=|B|=\sum_{i=0}^k \binom{n}{k}$, then $\overrightarrow{p}_v(A,B)\geq \binom{n}{k+1}$.
\end{conjecture}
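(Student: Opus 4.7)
The plan is to mirror the approach used for the edge case. I aim to prove the stronger assertion $\overrightarrow{p}_v(A,B) = p_v(A,B)$ whenever $A$ is a down-set and $B$ is an up-set; combined with the full Bollob\'as--Leader vertex theorem (which gives $p_v(A,B)\geq BL_v(|A|,|B|)$ for all sizes of $A$ and $B$), this yields the conjecture. The first step is the vertex analogue of Lemma~\ref{edgelemma}: splitting each internal vertex of $Q_n$ (respectively $\overrightarrow{Q}_n$) into an in-copy and an out-copy joined by a unit-capacity edge and applying max-flow min-cut yields $p_v(A,B)=\min|\partial_v(S)|$ and $\overrightarrow{p}_v(A,B)=\min|\partialv(S)|$, where the minima are taken over a Menger-cut range of feasible $S$ (essentially $A\subseteq S\subseteq B^c$, with a minor adjustment ensuring the boundary avoids $B$).

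Given these identities, it then suffices to prove a directed vertex isoperimetric statement analogous to Theorem~\ref{diredgeiso}: the minimum of $|\partialv(S)|$ over feasible $S$ is attained by a down-set. The payoff is exactly as in the edge case: for a down-set $S$, any neighbour of $y\in S^c$ lying in $S$ must be a subset of $y$ (otherwise down-closure would force $y\in S$), and so $\partialv(S)=\partial_v(S)$. Combined with the trivial $|\partialv(S)|\leq|\partial_v(S)|$ valid for every $S$, the two minima coincide and hence $\overrightarrow{p}_v(A,B)=p_v(A,B)$. I would prove this isoperimetric step by an unusual compression argument in the spirit of Theorem~\ref{diredgeiso}: for each coordinate $i$ one defines two local operators on feasible $S$, each a variant of the standard down-compression (replacing $x\cup\{i\}\in S$ by $x$ when $x\notin S$), with the properties that a simultaneous fixed point of both is a down-set and that each operator preserves $A\subseteq S\subseteq B^c$ (which holds because $A$ and $B^c$ are both down-sets and the compression is monotone). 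Although neither operator is guaranteed to weakly decrease $|\partialv|$ in isolation, the goal is to show that for any non-down-set $S$ at least one of the two does so while strictly reducing a suitable monovariant such as $\sum_{x\in S}|x|$; iteration then terminates in a down-set minimizer.

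The main obstacle is precisely this two-operator case analysis. Replacing $x\cup\{i\}\in S$ by $x$ affects $\partialv$ through all $n-1$ other coordinates, and the directed nature of the boundary makes the net change depend sensitively on which neighbours of $x$ and of $x\cup\{i\}$ lie in $S$. Designing the second operator so that every configuration in which the first fails to weakly decrease $|\partialv|$ is handled by it (and vice versa) is the technical heart of the argument; I expect this to be noticeably more intricate than in the edge case, since the directed vertex boundary is less local than the directed edge boundary.
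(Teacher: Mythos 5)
Your high-level strategy matches the paper's: reduce the conjecture to the equality $\overrightarrow{p}_v(A,B)=p_v(A,B)$ for a down-set $A$ and up-set $B$ (Theorem \ref{directedvertices}), convert both path counts into vertex-cut minima via Menger's theorem in $Q_n-F$ and $\overrightarrow{Q_n}-F$ with $F$ the set of $A$--$B$ edges (Observation \ref{vertexobs}), note that $\partialv(S)=\partial_v(S)$ for a down-set $S$, and prove a directed vertex isoperimetric statement (Theorem \ref{dirvertexiso}) by a two-operator compression. But the proposal stops exactly where the content begins: you never define the two operators, and you explicitly defer the case analysis showing that for every $S$ at least one of them does not increase $|\partialv(S)|$. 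That case analysis \emph{is} the theorem. The paper's operators are $C_i(S)=\{x\in S: x\setminus\{i\}\in S\}$ and $D_i(S)=S\cup\{x: x\cup\{i\}\in S\}$ --- one deletes the top of every bad pair, the other adds the bottom --- and the key fact (Lemma \ref{compressionreducesupneighbours}) is the averaging inequality $|\partialv(S)|\geq\frac{1}{2}\left(|\partialv(C_i(S))|+|\partialv(D_i(S))|\right)$, obtained by computing the four symmetric differences $\partialv(S)\setminus\partialv(D_i(S))$, etc., in terms of the $i$-sections $T,U,V,W$. Nothing in your write-up supplies, or even constrains, this computation, so the argument as it stands is a plan rather than a proof.

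A second, concrete problem is your termination mechanism. You propose iterating until a monovariant such as $\sum_{x\in S}|x|$ stops decreasing, on the premise that both operators are variants of the standard down-compression that move $x\cup\{i\}$ down to $x$. The operators that actually make the averaging inequality work do not preserve $|S|$: $D_i$ strictly enlarges $S$ (so $\sum_{x\in S}|x|$ increases under it), while $C_i$ shrinks it, so no such monovariant is monotone along the iteration. The paper needs no monovariant: Lemma \ref{preservesdownness} shows that both $C_i$ and $D_i$ preserve $j$-downness for every $j$, so a single pass over $i=1,\dots,n$, choosing at each step whichever operator does not increase $|\partialv|$, terminates in a down-set satisfying $A\subseteq S'\subseteq B^c$ (Observation \ref{containments}). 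If you insist on cardinality-preserving move-down operators you would need a different pair and a different inequality, and it is not clear such a pair exists. A final minor remark: the calculation is not ``noticeably more intricate than the edge case'' --- it is the same double count with $\partial_v$ in place of $\partial_e$ --- but the correction term for $A$--$B$ edges must be carried identically through the directed and undirected cut counts so that it cancels in the comparison; you gesture at this adjustment without verifying it.
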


In Section 2.2 of this paper, we prove a strengthening of this conjecture.

\begin{theorem}\label{directedvertices}
Suppose $A$, a down-set,  and $B$, an up-set, are disjoint non-empty subsets of $Q_n$. Then $\overrightarrow{p}_v(A,B)=p_v(A,B)$.
\end{theorem}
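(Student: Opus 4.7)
My plan is to parallel the proof of Theorem \ref{diredges}. The first step is to establish the appropriate vertex analog of Lemma \ref{edgelemma}: by applying Menger's theorem to $Q_n$ (in both its undirected and directed forms, after the standard reduction of vertex-connectivity to edge-connectivity that splits each internal vertex into an in- and out-copy), one obtains
\[
p_v(A,B) = \min_{S \in \mathcal F} |\partial_v(S)|, \qquad \overrightarrow{p}_v(A,B) = \min_{S \in \mathcal F} |\partialv(S)|,
\]
for the same family $\mathcal F$ of separating sets $S$ with $A \subseteq S \subseteq B^c$. Since $\partialv(S) \subseteq \partial_v(S)$ for every $S$, the inequality $\overrightarrow{p}_v(A,B) \leq p_v(A,B)$ is immediate; the theorem reduces to the reverse inequality.

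The second step is a vertex analog of Theorem \ref{diredgeiso}: among $S \in \mathcal F$, the minimum of $|\partialv(S)|$ is attained by a down-set $S^*$. This suffices, because any down-set $S^*$ satisfies $\partial_v(S^*) = \partialv(S^*)$: if $z \in (S^*)^c$ has a neighbour $y \in S^*$, the case $y \supset z$ would force $z \in S^*$ by down-closure, a contradiction, so $y \subset z$ and $z \in \partialv(S^*)$. Thus $\min |\partialv(S)| = |\partial_v(S^*)| \geq \min |\partial_v(S)|$, giving $\overrightarrow{p}_v(A,B) \geq p_v(A,B)$.

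For the attainment claim I would use a compression argument in the spirit of Theorem \ref{diredgeiso}. Note that both endpoints of the feasible family, $A$ and $B^c$, are down-sets, so down-compressions in any coordinate $i$ preserve feasibility. As in the edge case, however, a single natural compression does not always weakly reduce $|\partialv|$—for instance, in $Q_4$ the feasible set $S = \{\emptyset, \{1,2\}\}$ has $|\partialv(S)| = 6$, while its down-closure $\{\emptyset, \{1\}, \{2\}, \{1,2\}\}$ has directed vertex boundary of size $8$. Mirroring the edge proof, I would therefore define two operators $C_i$ and $C'_i$ on feasible sets and argue that at least one of them produces a feasible set with directed vertex boundary no larger than that of $S$. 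Iterating over all $i \in [n]$ then yields a feasible set that is $i$-compressed in every coordinate, namely a down-set.

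The main obstacle I foresee is the two-operator case analysis. Unlike the undirected vertex boundary, $\partialv(S)$ depends on subset relations across several coordinates at once, so modifying $S$ in the $i$-direction can simultaneously create and destroy boundary vertices via $j$-edges for $j \neq i$. Choosing the right second operator $C'_i$, and carefully book-keeping that the configurations on which $C_i$ alone fails are precisely the configurations on which $C'_i$ succeeds, is where the real work lies; feasibility of both operators is easy, being a consequence of $A$ and $B^c$ being down-sets.
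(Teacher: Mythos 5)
Your overall architecture --- reduce both $p_v$ and $\overrightarrow{p}_v$ to a boundary-minimisation problem via Menger's theorem, then show the directed minimum over $\mathcal F$ is attained by a down-set via a two-operator compression, and finish with $\partial_v=\partialv$ on down-sets --- is the same as the paper's. But your first step contains a genuine error: the identities $p_v(A,B)=\min_S|\partial_v(S)|$ and $\overrightarrow{p}_v(A,B)=\min_S|\partialv(S)|$ are false whenever $A$ and $B$ are adjacent, which happens routinely for a disjoint down-set/up-set pair. A single edge from $A$ to $B$ is a path with empty interior, so no internal vertex cut accounts for it. Concretely, take $n=2$, $A=\{\emptyset,\{1\},\{2\}\}$ and $B=\{\{1,2\}\}$: the only feasible $S$ is $A$ itself, with $|\partial_v(S)|=|\partialv(S)|=1$, yet $p_v(A,B)=\overrightarrow{p}_v(A,B)=2$ (the two edges into $\{1,2\}$ have empty, hence disjoint, interiors). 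This is exactly the complication the paper flags (``we cannot directly apply flow theorems in $Q_n$''): one must instead apply Menger in $Q_n-E(A,B)$, obtaining $p_v(A,B)=e(A,B)+\kappa$ (Observation \ref{vertexobs}) and, on the directed side, $\overrightarrow{p}_v(A,B)=e(A,B)+\min_S|\partialv(S)|-|\{x\in B: d(x,y)=1 \text{ for some } y\in A\}|$. Your reduction survives only once you verify that the additive correction terms are identical on the directed and undirected sides, so that the comparison still comes down to $\min_S|\partialv(S)|=\min_S|\partial_v(S)|$.

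Beyond that, you leave the crux --- the two-operator compression --- explicitly undone (``where the real work lies''), so the proposal is incomplete as written. The statement you need is precisely Theorem \ref{dirvertexiso}, and no new second operator has to be invented: the same pair $C_i(S)=\{x\in S: x\setminus\{i\}\in S\}$ and $D_i(S)=S\cup\{x: x\cup\{i\}\in S\}$ from the edge case works, feasibility being Observation \ref{containments} and the $j$-down preservation being Lemma \ref{preservesdownness}. The book-keeping you worry about is settled by the averaging inequality $|\partialv(S)|\geq\tfrac12\big(|\partialv(C_i(S))|+|\partialv(D_i(S))|\big)$ of Lemma \ref{compressionreducesupneighbours}, proved by expressing the four symmetric differences of boundaries in terms of the $i$-sections $T,U,V,W$. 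Your $Q_4$ example showing that the plain down-closure can increase $|\partialv|$ is correct ($6$ versus $8$), and your verification that $\partial_v(S^*)=\partialv(S^*)$ for a down-set $S^*$ is also correct; those parts match the paper's reasoning.
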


As an intermediate step in the proof, we prove the following isoperimetric-type inequality, which may be of independent interest:

\begin{theorem}\label{dirvertexiso}
Let ${A}$ be a non-empty down-set and ${B}$ be a non-empty up-set, both non-empty subsets of $Q_n$. Suppose ${A}\subseteq S\subseteq B^c$, then there exists a down-set $S'$ satisfying $A\subseteq S' \subseteq B^c$ with $\partialv(S)\geq \partialv(S')$.
\end{theorem}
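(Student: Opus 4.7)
The plan is a compression argument modelled on the proof of Theorem~\ref{diredgeiso}. For each coordinate $i\in[n]$, I introduce two operators. The standard $i$-down-compression $C_i$ acts on each pair $\{x,x\cup\{i\}\}$ with $i\notin x$: if $S\cap\{x,x\cup\{i\}\}=\{x\cup\{i\}\}$, it is replaced by $\{x\}$. The ``thickening'' variant $C'_i$, for each such pair, adds $x$ to $S$ while keeping $x\cup\{i\}$. Both operators produce $i$-compressed sets, and both preserve the constraint $A\subseteq S\subseteq B^c$: since $A$ is a down-set, $x\cup\{i\}\in A$ forces $x\in A\subseteq S$, so $C_i$ never deletes an element of $A$; since $B^c$ is a down-set, any added element $x\subseteq x\cup\{i\}\in S\subseteq B^c$ lies in $B^c$. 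A set that is $i$-compressed in every direction is a down-set.

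Next I would analyze $|\partialv(C_i(S))|$ and $|\partialv(C'_i(S))|$ relative to $|\partialv(S)|$ via the layer decomposition in direction $i$: with $S_0=\{x\subseteq [n]\setminus\{i\}:x\in S\}$ and $S_1=\{x\subseteq[n]\setminus\{i\}:x\cup\{i\}\in S\}$, working inside $Q_{n-1}$ one obtains $|\partialv(S)|=|\partialv(S_0)|+|(S_0\cup\partialv(S_1))\setminus S_1|$, with analogous formulas for $C_i(S)$ (layer pair $(S_0\cup S_1,\,S_0\cap S_1)$) and $C'_i(S)$ (layer pair $(S_0\cup S_1,\,S_1)$). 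The main obstacle is that small examples show neither operator alone always preserves $|\partialv|$: each can strictly increase it. The crux of the proof is therefore a dichotomy---for any $S$ with $A\subseteq S\subseteq B^c$ not $i$-compressed in some direction $i$, at least one of $C_i(S)$ and $C'_i(S)$ satisfies $|\partialv|\leq|\partialv(S)|$---which I expect to establish by case analysis on $S_0\cap S_1$, $S_0\triangle S_1$, and their respective directed shadows in $Q_{n-1}$.

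Granting the dichotomy, iterate: at each stage, pick a direction $i$ in which $S$ is not yet $i$-compressed and apply the ``good'' operator. Both operators strictly increase the lex-rank of the characteristic vector $\mathbf{1}_S$ in any total ordering of $\mathcal{P}[n]$ refining $\subseteq$ (in each case a $0$-coordinate at a low rank becomes a $1$, with no lower-ranked change), so the process is bounded and terminates at a set compressed in every direction, i.e.\ a down-set $S'$ with $A\subseteq S'\subseteq B^c$ and $|\partialv(S')|\leq|\partialv(S)|$.
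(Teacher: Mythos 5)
Your overall strategy---two competing compressions in each direction together with a dichotomy asserting that at least one of them does not increase $|\partialv|$---is exactly the shape of the paper's argument, and your verification of the containments $A\subseteq S\subseteq B^c$ and your termination argument are fine. However, the dichotomy is the entire content of the theorem, and you do not prove it: you only announce that you ``expect to establish'' it by a case analysis. As written, the proof is therefore incomplete precisely at the one step that carries all the difficulty.

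More seriously, the dichotomy is \emph{false} for the pair of operators you chose, so the intended case analysis cannot succeed. Your first operator is the standard down-compression (replace each hanging top $x\cup\{i\}$ by $x$), whereas the paper's first operator $C_i(S)=\{x\in S:x\setminus\{i\}\in S\}$ \emph{deletes} the hanging tops without adding anything. Take $n=4$, $i=4$ and $S=\{\emptyset,\{4\},\{1,2,4\}\}$, which is admissible (e.g.\ $A=\{\emptyset\}$, $B=\{\{1,2,3,4\}\}$) and not $4$-compressed. Then $|\partialv(S)|=7$, but your compression yields $\{\emptyset,\{4\},\{1,2\}\}$ and your thickening yields $\{\emptyset,\{4\},\{1,2\},\{1,2,4\}\}$, and each of these has directed vertex boundary of size $8$: both of your operators strictly increase $|\partialv|$ in this direction. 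The paper's deletion operator instead gives $\{\emptyset,\{4\}\}$ with boundary of size $6$, and the identity $2\cdot 7=6+8$ illustrates the averaging inequality $2\,|\partialv(S)|\geq|\partialv(C_i(S))|+|\partialv(D_i(S))|$ on which the paper's Lemma \ref{compressionreducesupneighbours} rests; the deletion and thickening operators are dual in a way that makes the two boundary differences comparable term by term, and the standard compression does not sit in any such averaging relation with the thickening. To repair your argument you would need to replace your first operator by the deletion operator $C_i$ and then actually carry out the boundary bookkeeping (computing $\partialv(C_i(S))\bigtriangleup\partialv(S)$ and $\partialv(D_i(S))\bigtriangleup\partialv(S)$ in terms of the sets $T,U,V$), which is what the paper does.
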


The proof of Theorem \ref{blvertices} uses a flow theorem (this time Menger's Theorem) to show a connection between the number of vertex-disjoint paths and vertex boundaries. Indeed, Bollob\'as and Leader show:

\begin{observation}\label{vertexobs}
The number of vertex-disjoint paths between $A$ and $B$ is equal to $e(A,B)$ plus the smallest vertex cut separating $A$ from $B$ in the graph $Q_n-E(A,B)$, i.e. the graph formed by deleting all edges from $A$ to $B$ from the hypercube.
\end{observation}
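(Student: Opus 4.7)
The plan is to separate each $A$-$B$ path according to whether it has length $1$ (an edge in $E(A,B)$) or length at least $2$. The single-edge paths have empty interiors and contribute exactly $e(A,B)$ paths with pairwise vertex-disjoint interiors; the longer paths will be controlled by Menger's theorem applied to the graph $G := Q_n - E(A,B)$, in which $A$ and $B$ are non-adjacent.

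For the upper bound, I would take any family $\mathcal{P}$ of $A$-$B$ paths with pairwise vertex-disjoint interiors and split it as $\mathcal{P} = \mathcal{P}_1 \sqcup \mathcal{P}_2$, where $\mathcal{P}_1$ consists of the length-$1$ paths (so $|\mathcal{P}_1| \le e(A,B)$). Each $P \in \mathcal{P}_2$ can be iteratively shortened: whenever an interior vertex of $P$ lies in $A$, restart $P$ there, and symmetrically for $B$. This produces a new family whose interiors are contained in the original ones, consisting of $A$-$B$ paths in $G$ with interiors in $V \setminus (A \cup B)$ and still pairwise vertex-disjoint. Any vertex separator $X \subseteq V \setminus (A \cup B)$ of $A$ from $B$ in $G$ must then meet the non-empty interior of each truncated $P \in \mathcal{P}_2$, and since these interiors are disjoint, $|\mathcal{P}_2| \le |X|$. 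Minimizing over $X$ yields $|\mathcal{P}| \le e(A,B) + c$, where $c$ is the minimum $A$-$B$ vertex separator in $G$.

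For the lower bound, I would begin with the $e(A,B)$ single edges in $E(A,B)$, which are trivially length-$1$ paths with (empty, hence) pairwise vertex-disjoint interiors. Applying the set-to-set form of Menger's theorem to $G$ yields a family of $c$ internally vertex-disjoint $A$-$B$ paths, whose interiors (after a further truncation to remove any occurrences of $A \cup B$) lie in $V \setminus (A \cup B)$. These $c$ paths are internally disjoint from each other and have interiors disjoint from the endpoints of the length-$1$ paths, so together they form a family of $e(A,B) + c$ paths with pairwise vertex-disjoint interiors, matching the upper bound.

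The only point requiring genuine care is the invocation of Menger's theorem for two vertex sets, together with the observation that one may choose the minimum $A$-$B$ vertex separator in $G$ inside $V \setminus (A \cup B)$. This is standard, usually handled by adding a super-source adjacent to all of $A$ and a super-sink adjacent to all of $B$ and appealing to the vertex form of Menger's theorem; once this is in place, the decomposition and truncation steps are routine bookkeeping.
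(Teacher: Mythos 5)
Your argument is correct and is essentially the paper's: the paper does not prove the observation separately, but notes that it falls out of its Menger-based proof of Theorem \ref{blfullvertices}, which applies Menger's theorem to $G=Q_n-F$ with $F=E(A,B)$ exactly as you do (direct edges counted separately, longer paths handled by the min separator in $G$). One small caveat: your truncation step can produce a length-one path with empty interior (e.g.\ truncating $a_1a_2b$ with $a_1,a_2\in A$, $b\in B$ yields the edge $a_2b\in E(A,B)$), which would evade the separator and break the bound $|\mathcal{P}_2|\leq |X|$; this situation cannot arise under the convention---which is in fact needed for the observation to be true at all---that $A$--$B$ paths meet $A\cup B$ only in their endpoints, under which the truncation is vacuous and your counting goes through.
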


We use essentially the same approach to show  a directed version of this observation, which we use to deduce Theorem \ref{directedvertices} from Theorem \ref{dirvertexiso}.

It is interesting to note that although the Edge Isoperimetric Inequality and the Vertex Isoperimetric Inequality use different approaches, our two directed versions have a very similar proof, both relying on the same compressions. Again, neither of these compressions works on its own, but we show at least one of them works for each set.

\section{Directed Isoperimetric Inequalities and Directed Paths}

We introduce here the two different classes of compression, which we use to prove Theorems \ref{diredgeiso} and \ref{dirvertexiso}. Each of these compressions makes $S$ more like a down set, in some sense that we will make concrete. For $S\subseteq \mathcal{P}[n]$, and $i\in [n]$, we say that:
\[C_i(S)=\{x\in S: x\setminus \{i\} \in S\} \text{ and } D_i(S)=S\cup \{x:x\cup\{i\}\in S\}.\]

We first state some properties of these compressions that will be used to prove both edge and vertex versions of our theorems.

\begin{observation}\label{containments}
If $A$ is a down-set, $B$ is an up-set, and $A\subseteq S \subseteq B^c$, then $A\subseteq  C_i(S) \subseteq S \subseteq D_i(S)\subseteq B^c$.
\end{observation}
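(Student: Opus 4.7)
The plan is to verify the four containments $A\subseteq C_i(S)$, $C_i(S)\subseteq S$, $S\subseteq D_i(S)$, and $D_i(S)\subseteq B^c$ one by one. The two middle ones are essentially tautological from the definitions: $C_i(S)$ is defined as a subset of $S$, and $D_i(S)$ is defined as a superset of $S$.

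For the leftmost containment $A\subseteq C_i(S)$, I would take $x\in A$. Since $A\subseteq S$, the set $x$ itself lies in $S$; so the only thing to check is that $x\setminus\{i\}\in S$ as well. For this I use that $A$ is a down-set: $x\setminus\{i\}\subseteq x\in A$ forces $x\setminus\{i\}\in A\subseteq S$, giving $x\in C_i(S)$.

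For the rightmost containment $D_i(S)\subseteq B^c$, I would take $x\in D_i(S)$ and split on the two cases from the definition of $D_i(S)$. If $x\in S$, then already $x\in B^c$ by hypothesis. Otherwise $x\cup\{i\}\in S\subseteq B^c$, i.e.\ $x\cup\{i\}\notin B$; since $B$ is an up-set, containment $x\in B$ would force $x\cup\{i\}\in B$, a contradiction, so $x\in B^c$.

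There is no real obstacle here: the observation is simply recording that the compression operators behave well with respect to being sandwiched between a down-set and the complement of an up-set. The only two substantive inputs are ``$A$ down-set $\Rightarrow$ $A$ closed under removing elements'' (used on the left) and ``$B$ up-set $\Rightarrow$ $B^c$ closed under removing elements'' (used on the right), and each is applied exactly once.
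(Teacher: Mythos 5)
Your proof is correct; the paper states this as an observation without proof, and your direct verification of the four containments (the middle two by definition, the outer two using that $A$ is closed under removing elements and that $B^c$ is closed under removing elements) is exactly the routine argument the author intends.
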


We call a set $S$ \emph{i-down} if $x\in S \Rightarrow x\setminus i \in S$. Clearly $S$ is a down set if and only if $S$ is $i$-down for all $i$. It is easy to see that both $C_i(S)$ and $D_i(S)$ are $i$-down sets. The following lemma shows that the operators $C_i$ and $D_i$ preserve the $j$-down property.

\begin{lemma}\label{preservesdownness}
Let $S\subseteq Q_n$ and $i, j \in [n]$. If $S$ is $j$-down then so is $D_i(S)$ and $C_i(S)$.
\end{lemma}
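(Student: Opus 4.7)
The plan is a direct verification from the definitions: take an arbitrary element $x$ of $C_i(S)$ or $D_i(S)$ and check that $x\setminus j$ lies in the same set, using the $j$-downness of $S$ at the appropriate moments. The argument splits on whether $i=j$ or $i\neq j$, reflecting the elementary set identity $(x\cup i)\setminus j=(x\setminus j)\cup i$, which holds precisely when $i\neq j$.

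When $i=j$, the stronger statement that $C_i(S)$ and $D_i(S)$ are always $i$-down (with no hypothesis on $S$) holds and covers this case. For $C_i$, if $x\in C_i(S)$ then by definition $x\setminus i\in S$, and $(x\setminus i)\setminus i=x\setminus i\in S$ shows that $x\setminus i$ itself satisfies the defining condition. For $D_i$, if $i\notin x$ then $x\setminus i=x\in D_i(S)$ trivially, while if $i\in x$ the second defining clause $x\cup i\in S$ collapses to $x\in S$, whence $(x\setminus i)\cup i=x\in S$ puts $x\setminus i$ into $D_i(S)$.

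When $i\neq j$, I would for $C_i(S)$ take $x\in C_i(S)$, so both $x$ and $x\setminus i$ lie in $S$; applying $j$-downness of $S$ yields $x\setminus j\in S$ and $(x\setminus i)\setminus j=(x\setminus j)\setminus i\in S$, so $x\setminus j\in C_i(S)$. For $D_i(S)$, I would split on which clause places $x$ into $D_i(S)$: if $x\in S$, then $x\setminus j\in S\subseteq D_i(S)$; if instead $x\cup i\in S$, then by $j$-downness $(x\cup i)\setminus j\in S$, which by the identity $(x\cup i)\setminus j=(x\setminus j)\cup i$ means $x\setminus j\in D_i(S)$. The only subtle point is this commutativity identity, which is why the $i=j$ case must be treated separately; otherwise this is a routine combinatorial check with no real obstacle.
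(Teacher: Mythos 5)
Your proof is correct and follows essentially the same route as the paper's: dismiss the case $i=j$ by noting $C_i(S)$ and $D_i(S)$ are always $i$-down, then for $i\neq j$ verify $j$-downness of $C_i(S)$ directly from the definition and of $D_i(S)$ by splitting on which clause puts $x$ into $D_i(S)$, using the identity $(x\cup i)\setminus j=(x\setminus j)\cup i$. The only difference is that you spell out the $i=j$ case that the paper calls trivial.
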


\begin{proof}
If $i=j$, this is trivial, so we assume otherwise.

Suppose $x\in C_i(S)$, then we must have $x\setminus i\in C_i(S)$, as $C_i(S)$ is $i$-down. Since $C_i(S)\subseteq S$, we get that $x$ and $x\setminus i$ are in $S$. By our assumption that $S$ is $j$-down, this implies $x\setminus j$ and $x\setminus \{i, j\}\in S$. The definition of $C_i$ allows us to conclude that $x\setminus j \in C_i(S)$, as required. 

Suppose now that $x\in D_i(S)$. Suppose first that $x\in S$ then $x\setminus j\in S$, since $S$ is $j$-down. This implies that $x\setminus j \in D_i(S)$, as $S\subseteq D_i(S)$. If instead $x\notin S$, then $x\cup i\in S$, by the definition of $D_i$. This implies that $(x\cup i)\setminus j\in S$ and thus $x\setminus j \in D_i(S)$, again by the definition of $D_i$.
\end{proof}

For $S\subseteq \mathcal{P}[n]$, the \emph{$i$-sections} of $S$ are the sets $S^+_i:=\{x\in \mathcal{P}([n]\setminus \{i\}): x\cup \{i\}\in S\}$ and $S^-_i:=\{x\in \mathcal{P}([n]\setminus {i}): x\in S\}$.

We also define, for $S\subseteq \mathcal{P}[n]$ and $i\in [n]$, three related subsets $T,U,V, \subseteq \mathcal{P}([n]\setminus \{i\})$:
\begin{align*}
T=&T_{S,i}=S_i^+\cap S_i^-=\{x\in \mathcal{P}([n]\setminus \{i\}): x\in S \text{ and } x\cup i\in S\}\\
U=&U_{S,i}=S_i^-\setminus S_i^+=\{x\in \mathcal{P}([n]\setminus \{i\}): x\in S \text{ and } x\cup i\notin S\}\\
V=&V_{S,i}=S_i^+\setminus S_i^-=\{x\in \mathcal{P}([n]\setminus \{i\}): x\notin S \text{ and } x\cup i \in S\}\\
W=&W_{S,i}=\{x\in \mathcal{P}([n]\setminus \{i\}): x\notin S, x\cup \{i\}\notin S\}
\end{align*}

Given a subset $A$ of $\mathcal{P}([n]\setminus \{i\})$, we write $A\times \{i\}$ for the set $\{a\cup\{i\}: a\in A\}$.

These sets give us another way of viewing $C_i$ and $D_i$. Indeed, $S=(T\cup U)\cup (T\cup V)\times\{i\}$. Similarly, $C_i(S)=S\setminus(V\times \{i\})$ and $D_i(S)=S\cup U$.

\subsection{Edge Version}

For sets, $S_1, S_2\subseteq V(Q_n)$, we denote by $\partial_e(S_1,S_2)$ the set of edges with one endpoint in $S_1$ and one endpoint in $S_2$. Similarly, we write $\partiale(S_1,S_2)$ for the set of edges with the smaller endpoint in $S_1$ and the larger endpoint in $S_2$. One can see that $\partial_e(S)=\partial_e(S, S^c)$ and $\partiale(S)=\partiale(S,S^c)$.

\begin{proof}[Proof of Theorem \ref{diredgeiso}]

The majority of the proof is contained in the following key lemma.

\begin{lemma}\label{compressionreducesoutedges}
For any set $S$, and all $i$, $|\partiale(S)|\geq \min\Big\{|\partiale(D_i (S))|, |\partiale(C_i(S))|\Big\}$.
\end{lemma}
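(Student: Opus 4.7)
The plan is to express each of $|\partiale(S)|$, $|\partiale(C_i(S))|$ and $|\partiale(D_i(S))|$ in terms of the $(T,U,V)$ decomposition, reducing the lemma to an inequality about a single set function on $\mathcal{P}([n]\setminus\{i\})$, and then to dispatch that reduced inequality by submodularity.

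Write $g(A)$ for the directed edge boundary of $A$ computed inside the $(n-1)$-dimensional cube on ground set $[n]\setminus\{i\}$. I would partition the directed edges of $\partiale(S)$ into those that flip coordinate $i$ and those that do not. The $i$-flipping edges in $\partiale(S)$ are precisely the edges $(x,x\cup\{i\})$ with $x\in U$, contributing $|U|$. The remaining edges split according to their $i$-section: the lower $i$-section contributes $g(T\cup U)$ (since $S_i^-=T\cup U$) and the upper $i$-section contributes $g(T\cup V)$ (since $S_i^+=T\cup V$). Applying the same analysis to $C_i(S)=(T\cup U)\cup T\times\{i\}$ and $D_i(S)=(T\cup U\cup V)\cup(T\cup V)\times\{i\}$ yields
\begin{align*}
|\partiale(S)| &= |U| + g(T\cup U) + g(T\cup V),\\
|\partiale(C_i(S))| &= |U| + g(T\cup U) + g(T),\\
|\partiale(D_i(S))| &= |U| + g(T\cup U\cup V) + g(T\cup V),
\end{align*}
so after cancellation the lemma reduces to the dichotomy: either $g(T\cup V)\geq g(T)$, or $g(T\cup U\cup V)\leq g(T\cup U)$.

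To establish this dichotomy I would show that $g$ is submodular. A direct computation of the marginal change when adding a vertex $v\notin A$ gives $g(A\cup\{v\})-g(A)=\bigl((n-1)-|v|\bigr)-d(v,A)$, where $d(v,A)$ is the number of neighbours of $v$ lying in $A$: each of the $(n-1)-|v|$ upward edges leaving $v$ enters $\partiale$ unless its endpoint already lies in $A$, while every upward edge from $A$ into $v$ is lost. This marginal is non-increasing in $A$, which is submodularity. Inserting the elements of $V$ one at a time into $T$ and into $T\cup U$ and summing telescopically gives
\[
g(T\cup V)-g(T) \;\geq\; g(T\cup U\cup V)-g(T\cup U).
\]
If the left-hand side is non-negative, the first case of the dichotomy holds; otherwise the right-hand side is also strictly negative, giving the second case.

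The main obstacle, I expect, is conceptual rather than computational: neither $C_i$ nor $D_i$ individually reduces the directed edge boundary, so one must find a structural relation forcing at least one of them to work. Recognising that submodularity of $g$ is precisely what interpolates between these two compressions is the heart of the proof; once it is in place, the edge-counting step is routine bookkeeping.
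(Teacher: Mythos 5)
Your proof is correct, and it proves the same strengthened conclusion as the paper (namely $2|\partiale(S)|\geq|\partiale(C_i(S))|+|\partiale(D_i(S))|$), but the route to it is organised differently. The paper works directly with the set differences $\partiale(S)\setminus\partiale(D_i(S))$, $\partiale(D_i(S))\setminus\partiale(S)$, etc., identifies each as a set of directed edges between pairs drawn from $T,U,V,W$, and finishes with the single containment $\partiale(V,W)\subseteq\partiale(V,W\cup U)$. You instead decompose each boundary count by $i$-sections, obtaining the exact identities $|\partiale(S)|-|\partiale(C_i(S))|=g(T\cup V)-g(T)$ and $|\partiale(S)|-|\partiale(D_i(S))|=g(T\cup U)-g(T\cup U\cup V)$, and then derive the dichotomy from submodularity of the directed edge-boundary functional $g$ on the $(n-1)$-cube, proved via the marginal formula $g(A\cup\{v\})-g(A)=(n-1-|v|)-d(v,A)$ and a telescoping sum over $V$ (which is legitimate since $V$ is disjoint from $T\cup U$). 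Your version buys two things: it makes transparent \emph{why} at least one of the two compressions must work --- the two deficits are the same increment of $g$ along $V$ evaluated at nested base sets, so submodularity forces them to be ordered --- and your identities are exact, whereas the paper's claimed equality $\partiale(S)\setminus\partiale(D_i(S))=\partiale(T,V)$ should really be $\partiale(T\cup U,V)$ (the discrepancy $\partiale(U,V)$ has the favourable sign, so the paper's final inequality survives, but your bookkeeping avoids the slip altogether). The paper's version is marginally shorter and stays entirely inside $Q_n$ without introducing the auxiliary functional $g$. Both arguments are specialisations of the same underlying fact, and your reduction to submodularity is a clean and fully rigorous way to present it.
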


\begin{proof}[Proof of Lemma \ref{compressionreducesoutedges}]
For convenience, in the proof of this lemma, we write $D$ for $D_i(S)$ and $C$ for $C_i(S)$. It is easy to see that the contribution to $\partiale(S)$ from edges along the $i$ direction, is exactly the same as the contribution to $\partiale(C)$ and to $\partiale(D)$.

Firstly, we can see that since $D$ is a superset of $S$, any element of the directed edge boundary of $S$ is in the directed edge boundary of $D$ unless its larger endpoint is in $D\setminus S$. Thus $\partiale(S)\setminus\partiale(D)=\partiale(T,V)$.

Conversely, an element of $\partiale(D)$ is an element of $\partiale(S)$ unless its smaller endpoint is in $D\setminus S$. Therefore, $\partiale(D)\setminus \partiale(S)=\partiale(V, W)$.

Similar arguments show that $\partiale(S)\setminus\partiale(C)=\partiale(V\times \{i\},(W\cup U)\times \{i\})$ and that $\partiale(C)\setminus \partiale(S)=\partiale(T\times \{i\}, V\times\{i\}).$

Thus \begin{align*}
\big|\partiale(S)\big|-\big|\partiale(D)\big|&=\big|\partiale(T,V)\big|-\big|\partiale(V, W)\big|\\
&\geq  \big|\partiale(T,V)\big|- \big|\partiale(V,W\cup U)\big|\\
&=\big|\partiale(C)\big|-\big|\partiale(S)\big|\\
\end{align*}

Therefore, $|\partiale(S)|\geq \frac{1}{2}\left(|\partiale(D)|+|\partiale(C)|\right)$, which concludes the proof of the Lemma.

\end{proof}

Given $S$ such that $A\subseteq S\subseteq B^c$, we use Lemma \ref{compressionreducesoutedges} and, for $i=1,...,n$, successively apply either $D_i$ or $C_i$ and to a set $S'$ with $|\partiale(S')|\leq |\partiale(S)|$. By Lemma \ref{preservesdownness}, $S'$ must be $i$-down for all $i$ and thus is a down-set. By Observation \ref{containments}, we have that the set satisfies the required containments. Since, the directed edge boundary of a down-set is the same as the edge boundary, Theorem \ref{bledges}$'$ finishes the proof.

\end{proof}

\subsection{Vertex version}

The proof of Theorem \ref{dirvertexiso} is very similar to the proof of the edge version, but with a slightly different calculation. 

\begin{proof}[Proof of Theorem \ref{dirvertexiso}]

Again, the bulk of the proof is in the following lemma.

\begin{lemma}\label{compressionreducesupneighbours}
For any set $S$, and all $i$, $\big|\partiale(S)\big|\geq \min\Big\{\big|\partiale(D_i (S))\big|, \big|\partiale(C_i(S))\big|\Big\}$.
\end{lemma}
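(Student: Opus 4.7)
Reading the statement as $|\partialv(S)|\geq\min\{|\partialv(D_i(S))|,|\partialv(C_i(S))|\}$ (interpreting the $\partiale$ in the statement as $\partialv$, as suggested by the section heading and the label \texttt{compressionreducesupneighbours}), the plan mirrors Lemma \ref{compressionreducesoutedges}: prove the stronger averaged inequality $|\partialv(S)|\geq\tfrac{1}{2}\bigl(|\partialv(D_i(S))|+|\partialv(C_i(S))|\bigr)$, from which the min-bound is immediate.

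Fix $i$ and partition $\mathcal{P}([n]\setminus\{i\})$ into the blocks $T,U,V,W$ from the preamble. For disjoint $X,Y\subseteq\mathcal{P}([n]\setminus\{i\})$ introduce the shorthand $\Gamma(X,Y):=|\{y\in Y:\exists j\in y,\,y\setminus j\in X\}|$, the number of $Y$-elements possessing a down-neighbour in $X$. I would split each directed vertex boundary into contributions from the lower slice (vertices not containing $i$) and the upper slice (vertices $y'\cup\{i\}$), recording in the upper case which of the candidate down-neighbours $y'$ and $(y'\setminus j)\cup\{i\}$ lies in the set. Using the layer descriptions $S^{-}=C_i(S)^{-}=T\cup U$, $D_i(S)^{-}=T\cup U\cup V$, $S^{+}=D_i(S)^{+}=T\cup V$, $C_i(S)^{+}=T$, this bookkeeping yields
\begin{align*}
|\partialv(S)| &= \Gamma(T\cup U,\,V\cup W)+|U|+\Gamma(T\cup V,\,W),\\
|\partialv(D_i(S))| &= \Gamma(T\cup U\cup V,\,W)+|U|+\Gamma(T\cup V,\,W),\\
|\partialv(C_i(S))| &= \Gamma(T\cup U,\,V\cup W)+|U|+\Gamma(T,\,V\cup W).
\end{align*}
In each line the summand $|U|$ accounts for the upper-layer vertices $y'\cup\{i\}$ with $y'\in U$, which lie in the boundary automatically because $y'$ itself sits in each of the three lower slices.

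Subtracting the last two identities from twice the first, and splitting $V\cup W=V\sqcup W$, the quantity $2|\partialv(S)|-|\partialv(D_i(S))|-|\partialv(C_i(S))|$ collapses to
\[
\bigl[\Gamma(T\cup U,V)-\Gamma(T,V)\bigr]+\bigl[\Gamma(T\cup U,W)+\Gamma(T\cup V,W)-\Gamma(T\cup U\cup V,W)-\Gamma(T,W)\bigr].
\]
The first bracket is nonnegative by monotonicity of $\Gamma(\cdot,V)$ in its first argument. I would verify the second pointwise: for each $y\in W$, let $t,u,v\in\{0,1\}$ indicate whether $y$ has a down-neighbour in $T,U,V$ respectively; the $y$-contribution is $[t\vee u]+[t\vee v]-[t\vee u\vee v]-t$, and a routine eight-case check shows that this equals $1$ exactly when $t=0$ and $u=v=1$, and $0$ otherwise. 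Both brackets being nonnegative then gives the averaged inequality.

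The main obstacle will be the combinatorial bookkeeping that produces the three $\Gamma$-formulas, in particular keeping disjoint the two distinct mechanisms (a lower-layer neighbour $y'$ versus an upper-layer neighbour $(y'\setminus j)\cup\{i\}$) by which an upper-layer vertex can enter the directed vertex boundary. Once these expressions are in hand, the Boolean cancellation for $y\in W$ is elementary, but it is precisely this cancellation that makes the edge-style averaging argument go through in the vertex setting.
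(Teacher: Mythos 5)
Your proposal is correct, and you rightly identified that the $\partiale$ in the statement is a typo for $\partialv$. Your overall strategy coincides with the paper's: both establish the stronger averaged inequality $|\partialv(S)|\geq\tfrac12\bigl(|\partialv(D_i(S))|+|\partialv(C_i(S))|\bigr)$ via the $T,U,V,W$ decomposition. The execution differs, though. The paper never writes closed-form expressions for the three boundaries; instead it computes the four set differences directly, e.g.\ $\partialv(S)\setminus\partialv(D)=\partialv(T\cup U)\cap V$ and $\partialv(D)\setminus\partialv(S)=\partialv(V)\setminus h(T\cup U)$ where $h(X)=X\cup\partialv(X)$, and then deduces $|\partialv(S)|-|\partialv(D)|\geq|\partialv(C)|-|\partialv(S)|$ from the two containments $\partialv(T)\cap V\subseteq\partialv(T\cup U)\cap V$ and $h(T)\cup U\subseteq h(T\cup U)$. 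Your route instead derives exact $\Gamma$-formulas for all three boundaries (which I have checked: the layer descriptions $S^-=C^-=T\cup U$, $D^-=T\cup U\cup V$, $S^+=D^+=T\cup V$, $C^+=T$ are right, and the three identities follow), and then verifies nonnegativity of the deficit by a pointwise Boolean check on $W$ --- which is exactly submodularity of the coverage function $X\mapsto\Gamma(X,W)$ applied to $T\cup U$ and $T\cup V$. What your version buys is an exact expression for $2|\partialv(S)|-|\partialv(D)|-|\partialv(C)|$ (so one can read off when equality holds), and it sidesteps the somewhat delicate set-difference identities in the paper, at the cost of heavier bookkeeping; the paper's version is shorter once those identities are trusted. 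Either way the lemma follows.
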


\begin{proof}
Once more, we write $C$ for $C_i(S)$ and $D$ for $D_i(S)$. Additionally, we write $h(S)=S\cup \partialv(S)$.

Since $C$ is a subset of $S$, any vertex in the directed vertex boundary of $C$ is in the directed vertex boundary of $S$ unless it is in $C\setminus S$. Thus $\partialv(C)\setminus \partialv(S)=\left(\partialv(T)\cap V\right)\times\{i\}$.

On the other hand, any vertex in $\partialv(S)$ but not in $\partialv(C)$ must neighbour a vertex in $S\setminus C$ and thus  $\partialv(S)\setminus\partialv(C)=\partialv(V)\times\{i\}\setminus\partialv(C)$. Since the set of vertices in $\partialv(C)$ that contain $i$ is $(U\times\{i\})\cup h(T\times\{i\})$, we may conclude that $\partialv(S)\setminus\partialv(C)=\left(\partialv(V)\setminus (h(T)\cup U)\right)\times \{i\}$.

Similarly, we have that $\partialv(S)\setminus \partialv(D)= \partialv(T\cup U)\cap V$ and that $\partialv(D)\setminus \partialv(S)=\partialv(V)\setminus h(T\cup U)$.

Since $V$ is disjoint from $T\cup U$, we see that $\partialv(T)\cap V\subseteq \partialv(T\cup U)\cap V$. Thus
\begin{align*}
\big|\partialv(S)\big|-\big|\partialv(D)\big|&=\big|\partialv(T\cup U)\cap V\big| -\big|\partialv(V)\setminus h(T\cup U)\big|\\
&\geq \big|\partialv(T)\cap V\big|- \big|\partialv(V)\setminus (h(T)\cup U)\big|\\
&=\big|\partialv(C)\big|-\big|\partialv(S)\big|.
\end{align*}

Therefore, $\big|\partialv(S)\big|\geq \frac{1}{2} \left( \big|\partialv(D)\big|+\big|\partialv(C)\big|\right)$.

\end{proof}

Given $S$, we can use Lemma \ref{compressionreducesupneighbours}, for $i=1,...,n$, and successively apply either $D_i$ or $C_i$ to yield a set $S'$ with $\big|\partialv(S')\big|\leq \big|\partialv(S)\big|$. By Lemma \ref{preservesdownness}, $S'$ must be $i$-down for all $i$ and thus is a down-set. By Observation \ref{containments}, we have that $S'$ satisfies the required containments. The final part of the theorem follows from properties of the $b$ function.  

\end{proof}

We now deduce Theorem \ref{directedvertices}, on vertex-disjoint directed paths.

\begin{proof}[Proof of Theorem \ref{directedvertices}]

Let $F=\{xy\in E(Q_n): x\in A, y\in B\}.$ We apply the directed version of Menger's Theorem to the directed graph $G=\overrightarrow{Q_n}-F$. It tells us that the number of paths in $G$ from $A$ to $B$, with vertex-disjoint interiors is the same as the minimum vertex cut separating $A$ from $B$ in $G$. This is the same as $\min\Big\{\big|\partialv(S)\big|:A\subseteq S\subseteq B^c\Big\}-|\{x\in B: d(x,y)=1, $ for some $y\in A\}|$. Theorem \ref{dirvertexiso} implies this is the same as $\min\Big\{\big|\partial_v(S)\big|:A\subseteq S\subseteq B^c\Big\}-|\{x\in B: d(x,y)=1, $ for some $y\in A\}|$, since the directed boundary is minimized by a down-set.  Observation \ref{vertexobs} concludes the proof.
\end{proof}

\section{Bollob\'as and Leader's Theorems}

For completeness, in this section we discuss the full versions of Bollob\'as and Leader's theorems on edge-disjoint and vertex-disjoint paths.

\subsection{Edge-disjoint paths}

In this subsection, we state Bollob\'as and Leader's full version of Theorem \ref{bledges}, give its proof, as well as that of Lemma \ref{edgelemma}, upon which it relies. 

First we give an approximation to the size of the edge boundary of an initial segment of binary order. Chung, F\"uredi, Graham and Seymour \cite{chungetal} observed that a good lower bound for $|\partial_e(I)|$, the size of the edge boundary of the initial segment of binary of size $x$ is:

\[
e(x)=e_n(x)=\begin{cases} x(n-\log_2 x) & \text{if } x\leq 2^{n-1},\\ (2^n-x)(n-\log_2(2^n-x)) & \text{if } x>2^{n-1}.\end{cases}
\]

This function $e(x)$ is easier to work with than $|\partial_e(I)|$, as there is a greater degree of monotonicity, and plays a key role in the proof of the following theorem. Note that $e(2^k)=(n-k)2^k$, which tells us that the following theorem is a generalisation of Theorem \ref{bledges}.

\begin{theorem}[Bollob\'as-Leader \cite{bollobasleader}]\label{bledgesfull}
Let $A$ and $B$ be disjoint non-empty subsets of $Q_n$. Then there is a family of at least  $\min\left\{e(|A|),e(|B|),2^{n-1}\right\}$  edge-disjoint directed paths from $A$ to $B$.
\end{theorem}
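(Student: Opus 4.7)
The plan is to combine the flow-duality Lemma~\ref{edgelemma} with the Edge Isoperimetric Inequality (Theorem~\ref{edgeiso}) and the Chung--F\"uredi--Graham--Seymour lower bound $|\partial_e(I_s)| \geq e(s)$ quoted just above the theorem, reducing everything to a calculus question about the function $e$. Specifically, Lemma~\ref{edgelemma} rewrites $p_e(A,B) = \min\{|\partial_e(S)| : A \subseteq S \subseteq B^c\}$; for any such $S$, Theorem~\ref{edgeiso} gives $|\partial_e(S)| \geq |\partial_e(I_{|S|})| \geq e(|S|)$. So it suffices to prove that for every integer $s$ with $|A| \leq s \leq 2^n - |B|$,
\[ e(s) \;\geq\; \min\{e(|A|),\, e(|B|),\, 2^{n-1}\}. \]

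To analyse $e$, I would work with the continuous function $f(y) = y(n - \log_2 y)$, of which $e$ is the symmetrised version: $e(x) = f(x)$ for $x \leq 2^{n-1}$ and $e(x) = f(2^n - x)$ otherwise, so in particular $e(x) = e(2^n - x)$. Calculus gives $f'(y) = n - \log_2 y - \log_2 \mathrm{e}$, which vanishes precisely at $y^\ast := 2^n/\mathrm{e}$ (with $\mathrm{e}$ Euler's number); direct substitution shows $f(2^{n-1}) = 2^{n-1}$, and the numerical fact $\mathrm{e}^2 > 2^{\mathrm{e}}$ yields $f(y^\ast) > 2^{n-1}$. Hence $f$ is strictly increasing on $[0, y^\ast]$ and strictly decreasing on $[y^\ast, \infty)$, while $f \geq 2^{n-1}$ throughout $[y^\ast, 2^{n-1}]$. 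This gives $e$ the greater monotonicity advertised just before the theorem statement: $e$ is increasing on $[0, y^\ast]$, at least $2^{n-1}$ throughout the central strip $[y^\ast, 2^n - y^\ast]$, and (by symmetry) decreasing on $[2^n - y^\ast, 2^n]$.

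The case split on $s$ is then immediate. If $s \leq y^\ast$, then $|A| \leq s \leq y^\ast$ lies in the increasing range of $e$, so $e(s) \geq e(|A|)$. If $y^\ast \leq s \leq 2^n - y^\ast$, then $e(s) \geq 2^{n-1}$. If $s \geq 2^n - y^\ast$, then $2^n - s \leq y^\ast$ and $|B| \leq 2^n - s$, so the first case applied to $2^n - s$ in place of $s$ combined with $e(s) = e(2^n - s)$ gives $e(s) \geq e(|B|)$. In every case $e(s) \geq \min\{e(|A|), e(|B|), 2^{n-1}\}$, completing the argument.

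The only real obstacle is convincing oneself that the $2^{n-1}$ term in the $\min$ is doing precisely the work of absorbing the non-monotonicity of $e$: because $e$ peaks strictly above $2^{n-1}$ inside $[0, 2^{n-1}]$, one cannot hope for $e(s) \geq e(|A|)$ once $s$ has crossed that peak, but one can salvage $e(s) \geq 2^{n-1}$ there. Locating the critical point $y^\ast = 2^n/\mathrm{e}$ and checking $f(y^\ast) \geq 2^{n-1}$ is the only quantitative step; the rest is bookkeeping.
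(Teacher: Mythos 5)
Your proposal is correct and follows essentially the same route as the paper: reduce via Lemma~\ref{edgelemma} and the Edge Isoperimetric Inequality to showing $e(s)\geq\min\{e(|A|),e(|B|),2^{n-1}\}$ for every $s$ with $|A|\leq s\leq 2^n-|B|$, then exploit the unimodality and symmetry of $e$. The only cosmetic difference is that the paper takes $[2^{n-2},\,3\cdot 2^{n-2}]$ (where $e$ equals $2^{n-1}$ at both endpoints and at the midpoint) as the central strip on which $e\geq 2^{n-1}$, rather than locating the exact peak at $2^n/\mathrm{e}$ as you do.
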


In \cite{bollobasleader}, the bound in the Theorem was stated slightly incorrectly in the case where both sets have size very close to $2^{k-1}$; the version stated above is the amended version.

The function $e$ is monotone increasing up to $x=2^n/\exp(1)$, it then decreases until $x=2^{n-1}$, and is symmetric about this point. Although the argument of \cite{bollobasleader} is essentially correct, it was incorrectly stated that $e$ is increasing up to $x=2^{n-1}$, leading to the erroneous bound $\min\{e(|A|), e(|B|)\}$ in the Theorem.

We give the proof of Lemma \ref{edgelemma} and then deduce Theorem \ref{bledgesfull}. Note that only the first part of the Lemma is required for Theorem \ref{bledgesfull}, but the second part was required for our Theorem \ref{diredges}, so we prove this part in the greater detail.

\begin{proof}[Proof of Lemma \ref{edgelemma}]
To prove the second part, we use the Max-Flow-Min-Cut Theorem on the directed graph $\overrightarrow{Q_n}$ with each edge having capacity 1 and all elements of $A$ regarded as sources and all elements of $B$ as sinks. In this setup, the value of the maximum flow is the same as the number of edge-disjoint directed paths from $A$ to $B$, since the Integrality Theorem implies there is an integer-valued maximum flow. The Max-Flow-Min-Cut Theorem states that the maximum flow value is also equal to the capacity of the smallest edge cut. Given an edge cut, write $S$ for the component containing $A$ in the graph formed by deleting  the edge cut. Clearly $A\subseteq S \subseteq B^c$. If the cut is minimal, then $\partiale(S)$ is the whole cut and its capacity is precisely $\big|\partiale(S)\big|$.

Similarly, to prove the first part, we apply the Max-Flow-Min-Cut theorem to the graph $Q_n$, giving each edge capacity 1, and viewing $A$ as the set of sources and $B$ as the set of sinks. This time, the maximum flow is simply $p_e(A,B)$, and as before, we may show that the minimum edge cut must be $\partial_e(S)$ for some $S$ satisfying $A\subseteq S\subseteq B^c$, and that for all such S, $\partial_e(S)$ is an edge cut, which concludes the proof.
\end{proof}

\begin{proof}[Proof of Theorem \ref{bledgesfull}]
By Lemma \ref{edgelemma}, we may choos $S$ with $p_e(A)=\partial_e(S)$ and $A\subseteq S\subseteq B^c\}$. Recall that $|\partial_e(S)|\geq e(|S|)$. If $e(|S|)\geq 2^{n-1}$, we are done. If not, since $e(2^{n-2})=e(2^{n-1})=e(3\cdot 2^{n-2})=2^{n-1}$, we have that $|A|\leq |S|<2^{n-2}$ or $|B^c|\leq |S^c|<2^{n-2}.$ In either case, monotonicity of $e$ in these intervals completes the proof. 
\end{proof}

\subsection{Vertex-disjoint paths and matchings.}

In this section we give the full version of Bollob\'as and Leader's lower bound on $p_v(A,B)$, for all values of $|A|$ and $|B|$.   

We first follow \cite{bollobasleader} and define the function $b(x)$, used as a lower bound for $|\partial_v(S)|$, where $S$ is a set of size $x$. For all $n$, we may write any smaller positive number $x$ uniquely in the form $\sum_{i=0}^k \binom{n}{k} +\alpha \binom{n}{k+1}$, for some $0\leq k\leq n$ and $0\leq \alpha <1$. We then define
\[b(x)=b_n(x)=(1-\alpha)\binom{n}{k+1}+\alpha \binom{n}{k+2}.\]

This allows us to state the full vertex-disjoint paths result from \cite{bollobasleader}.

\begin{theorem}[Bollob\'as-Leader \cite{bollobasleader}]\label{blfullvertices}
Let $A$ and $B$ be disjoint non-empty subsets of $Q_n$. Then there is a family of at least the minimum of $b(|A|)$ and $b(|B|)$ paths from $A$ to $B$ with vertex-disjoint interiors. 
\end{theorem}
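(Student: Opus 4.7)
The plan is to follow the structure of the proof of Theorem \ref{bledgesfull}, using Observation \ref{vertexobs} and Menger's Theorem in place of Max-Flow-Min-Cut, and the Vertex Isoperimetric Inequality (Theorem \ref{vertexiso}) in place of the edge one. First, I would apply Observation \ref{vertexobs} to write $p_v(A, B) = e(A, B) + \kappa$, where $\kappa$ is the minimum vertex cut separating $A$ from $B$ in the graph $Q_n - E(A, B)$. The vertex form of Menger's theorem realises $\kappa$ as the minimum of $|T|$ over cuts $T$ disjoint from $A \cup B$.

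For any such minimum cut $T$, let $S$ be the set of vertices reachable from $A$ in $Q_n - E(A,B) - T$, so that $A \subseteq S \subseteq B^c$. One checks that $\partial_v(S)$ decomposes as the disjoint union of $T$ and the set $R = \{b \in B : \emptyset \neq N(b) \cap S \subseteq A\}$: a vertex adjacent to $S$ but not in the cut must be reached only through edges that have been deleted, which forces it to lie in $B$ and have all its $S$-neighbours in $A$. Since each $b \in R$ is the endpoint of at least one edge in $E(A, B)$, we have $|R| \leq e(A, B)$, and hence
\[
p_v(A, B) = e(A, B) + |T| = e(A, B) + |\partial_v(S)| - |R| \geq |\partial_v(S)|.
\]
The Vertex Isoperimetric Inequality then gives $|\partial_v(S)| \geq |\partial_v(I_{|S|})|$, where $I_{|S|}$ is the initial segment of the simplicial order of size $|S|$, and a short computation verifies $|\partial_v(I_{|S|})| \geq b(|S|)$.

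It then remains to show that $b(|S|) \geq \min\{b(|A|), b(|B|)\}$ for the $|S|$ arising from the minimum cut, using $|A| \leq |S| \leq 2^n - |B|$. This should follow from the piecewise-linear, unimodal shape of $b$ (peaking near $2^{n-1}$). The main obstacle I expect is the behaviour of $b$ around its peak: since $b$ is not symmetric about $2^{n-1}$, a naive monotonicity argument fails when $|S|$ is close to $2^{n-1}$, and one must exploit additional constraints coming from $T$ being a valid cut (for instance, ruling out $S = B^c$, which would force $T$ to be trivial and contradict minimality) to push $|S|$ into a regime where monotonicity of $b$ applies. This step plays the same role as the refined case-analysis in the proof of Theorem \ref{bledgesfull} using the identity $e(2^{n-2}) = e(3 \cdot 2^{n-2}) = 2^{n-1}$, and I expect it to be the most delicate part of the argument.
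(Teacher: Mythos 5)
Your skeleton is the right one and matches the paper's: apply Menger's theorem in $Q_n-F$ where $F=E(A,B)$, pass to the set $S$ of vertices reachable from $A$, and observe that $\partial_v(S)$ splits as the disjoint union of the cut $T$ and a set $R\subseteq B$ of vertices whose $S$-neighbours all lie in $A$, so that $|R|\le e(A,B)$ and hence $p_v(A,B)\ge|\partial_v(S)|$. This bookkeeping is essentially the paper's identity $B_1=B'\cap\partial_v(A')$, and your version of it is correct and arguably cleaner than the paper's final counting step.

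The gap is in the last step, and it is not one you can patch in the way you propose. You want $b(|S|)\ge\min\{b(|A|),b(|B|)\}$ and suggest forcing $|S|$ out of the region where $b$ dips by ``exploiting constraints from $T$ being a valid cut.'' No such constraint exists: for $n$ even, $b(x)<\binom{n}{n/2}$ for every $x$ strictly between $\sum_{i=0}^{n/2-1}\binom{n}{i}$ and $\sum_{i=0}^{n/2}\binom{n}{i}$, an interval of length $\binom{n}{n/2}$, and the component sizes can perfectly well land anywhere in it --- indeed $|A|$ itself may already lie in this band, forcing $|S|$ to. The paper's fix is not to constrain $|S|$ but to replace $b(|S|)$ by a genuinely stronger lower bound on $|\partial_v(S)|$ in this band: writing $I$ for the initial segment of the simplicial order with $|I|=|S|$ and $I_0=I\cap[n]^{(n/2)}$, one shows $|\partial_v(I_0)|\ge|I_0|$ via Lov\'asz's form of the Kruskal--Katona theorem (Theorem~\ref{weak KK}) applied to the complements, whence $|\partial_v(S)|\ge\binom{n}{n/2}$; since $\binom{n}{n/2}$ is the maximum value of $b$, this closes the case. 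Your proposal contains no substitute for this ingredient. A secondary unaddressed point: when $|S|>2^{n-1}$ you must rerun the whole decomposition from the $B$-side component (where the analogous $T\sqcup R'$ splitting holds), because $\partial_v$ is not complementation-symmetric and $b(2^n-|B|)$ can be strictly smaller than $b(|B|)$; a unimodality argument on the $A$-side alone fails here even for $n$ odd, which is why the paper opens with the reduction to $|A'|\le|B'|$.
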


Note that $b(\sum_{i=0}^k \binom{n}{k})=\binom{n}{k+1}$, so this theorem agrees with the special case, Theorem \ref{blvertices}, stated above.

In the case where $n$ is even and $|A|$ and $|B|$ are very close to $2^{n-1}$, the proof in \cite{bollobasleader} contains a small error in a calculation, although the theorem is correct as stated. For completeness, we give the full, amended proof here, despite the change being a minor one. Indeed, the change is simply using a slightly stronger lower bound to $\partialv$ than $b(x)$.

The function $b(x)$ is increasing up to $x=\sum_{i=1}^{\lceil n/2\rceil} \binom{n}{i}$, and is decreasing thereafter. If $n$ is odd, this is equal to $2^{n-1}$. If $n$ is even, however, this is slightly less than $2^{n-1}$. It was incorrectly stated in \cite{bollobasleader} that $b$ is increasing up to $2^{n-1}$ in both cases.

For the amended proof, we will require a weak version of the Kruskal--Katona Theorem, due to Lov\'asz \cite{lovasz}. Here, and in what follows, we write $\overleftarrow{\partial}_v A:=\{x\setminus\{i\}: i\in x \text{ and } x\in A\}$. This is sometimes known as the \emph{lower shadow of $A$}.

\begin{theorem}\label{weak KK}
Let $A\subseteq [n]^{(r)}$. Write $|A|=\binom{x}{r}$, $x\in \mathbb{R}, x>r-1$. Then $|\overleftarrow{\partial}_v A|\geq \binom{x}{r-1}$.
\end{theorem}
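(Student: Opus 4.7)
I would argue by induction on $r$, allowing $n$ to vary. The base case $r=1$ is immediate: any nonempty $A\subseteq[n]^{(1)}$ has $\overleftarrow{\partial}_v A=\{\emptyset\}$, so $|\overleftarrow{\partial}_v A|=1=\binom{x}{0}$.

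For the inductive step, first reduce to a \emph{shifted} family by iterating the $ij$-compressions $\sigma_{ij}$ ($i<j$), which replace $S\in A$ with $(S\setminus\{j\})\cup\{i\}$ whenever $j\in S$, $i\notin S$, and $(S\setminus\{j\})\cup\{i\}\notin A$. Each $\sigma_{ij}$ preserves $|A|$, cannot increase $|\overleftarrow{\partial}_v A|$, and the iteration terminates, so we may assume $A$ is closed under every $\sigma_{ij}$. Then split $A=A_0\sqcup A_1^{+}$ by whether $n\in S$, with $A_0\subseteq[n-1]^{(r)}$, $A_1\subseteq[n-1]^{(r-1)}$, and $A_1^{+}=\{T\cup\{n\}:T\in A_1\}$.

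Partitioning $\overleftarrow{\partial}_v A$ by whether $n$ is present gives
\[
|\overleftarrow{\partial}_v A|=|\overleftarrow{\partial}_v A_0\cup A_1|+|\overleftarrow{\partial}_v A_1|,
\]
and the shifted property with $j=n$ forces $T\cup\{i\}\in A_0$ for every $T\in A_1$ and every $i\in[n-1]\setminus T$, so $A_1\subseteq\overleftarrow{\partial}_v A_0$ and the identity collapses to $|\overleftarrow{\partial}_v A|=|\overleftarrow{\partial}_v A_0|+|\overleftarrow{\partial}_v A_1|$. Writing $|A_0|=\binom{x_0}{r}$ and $|A_1|=\binom{x_1}{r-1}$, the inductive hypothesis on $r$ applied to $A_1$ gives $|\overleftarrow{\partial}_v A_1|\geq\binom{x_1}{r-2}$, an outer induction on $n$ applied to $A_0\subseteq[n-1]^{(r)}$ gives $|\overleftarrow{\partial}_v A_0|\geq\binom{x_0}{r-1}$, and $A_1\subseteq\overleftarrow{\partial}_v A_0$ also gives $|\overleftarrow{\partial}_v A_0|\geq\binom{x_1}{r-1}$. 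Together these reduce the theorem to the purely algebraic claim
\[
\binom{x_0}{r}+\binom{x_1}{r-1}=\binom{x}{r}\ \Longrightarrow\ \max\!\bigl(\binom{x_0}{r-1},\binom{x_1}{r-1}\bigr)+\binom{x_1}{r-2}\geq\binom{x}{r-1}.
\]

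The main obstacle is this convexity-style inequality for the generalised binomial coefficients. In the case $x_0<x_1$ the maximum equals $\binom{x_1}{r-1}$, and Pascal's identity reduces the inequality to $x_1+1\geq x$, which follows from monotonicity of $\binom{\cdot}{r}$ together with $\binom{x}{r}<\binom{x_1}{r}+\binom{x_1}{r-1}=\binom{x_1+1}{r}$. The harder case $x_0\geq x_1$ I would handle by treating $x_0$ as a free parameter, extracting $x_1$ from the constraint, and verifying non-negativity of $\binom{x_0}{r-1}+\binom{x_1}{r-2}-\binom{x}{r-1}$ by calculus, using the identity $\binom{y}{k-1}/\binom{y}{k}=k/(y-k+1)$ and the equality case at $x_0=x_1$, which corresponds to $A$ being the initial segment of colex order.
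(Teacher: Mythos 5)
The paper does not actually prove this statement: Theorem~\ref{weak KK} is quoted as a known result and attributed to Lov\'asz's \emph{Combinatorial Problems and Exercises}, so there is no in-paper proof to compare against. Judged on its own, your combinatorial skeleton is sound: the $ij$-compressions do preserve $|A|$ and do not increase the shadow, the iteration terminates, the decomposition $|\overleftarrow{\partial}_v A|=|\overleftarrow{\partial}_v A_0\cup A_1|+|\overleftarrow{\partial}_v A_1|$ is correct, and shiftedness does give $A_1\subseteq\overleftarrow{\partial}_v A_0$ (modulo the degenerate case $r=n$ and the case $A_1=\emptyset$, both easily handled).

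The genuine gap is the algebraic endgame. Because you split on the \emph{largest} element $n$, you are forced to apply the inductive hypothesis to both halves and then need the two-variable inequality $\binom{x_0}{r-1}+\binom{x_1}{r-2}\geq\binom{x}{r-1}$ subject to $\binom{x_0}{r}+\binom{x_1}{r-1}=\binom{x}{r}$ in the case $x_0\geq x_1$. This is the entire content of the theorem in that case and you have only asserted it ``by calculus.'' It is not routine: the bound is tight along the curve $x_0=x_1$, a term-by-term comparison fails (using $\binom{z}{r-1}=\tfrac{r}{z-r+1}\binom{z}{r}$ one would need $\tfrac{r-1}{x_1-r+2}\geq\tfrac{r}{x-r+1}$, which is false for $x_1>r-2$), and the derivatives of $\binom{x}{r}$ in the real variable $x$ involve harmonic-type sums, so a clean monotonicity or convexity argument is not simply waiting to be written down. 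Until that inequality is proved, the argument is incomplete at its crux.

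The standard route avoids this entirely: split a left-shifted $A$ on the \emph{smallest} element $1$, so that $\overleftarrow{\partial}_v A$ contains the disjoint union of $A_1$ and $\{T\cup\{1\}:T\in\overleftarrow{\partial}_v A_1\}$, giving $|\overleftarrow{\partial}_v A|\geq|A_1|+|\overleftarrow{\partial}_v A_1|$. Shiftedness gives $\overleftarrow{\partial}_v A_0\subseteq A_1$, and if $|A_1|<\binom{x-1}{r-1}$ then $|A_0|>\binom{x-1}{r}$, whence by induction $|\overleftarrow{\partial}_v A_0|>\binom{x-1}{r-1}>|A_1|$, a contradiction; so $|A_1|\geq\binom{x-1}{r-1}$, and the inductive bound $|\overleftarrow{\partial}_v A_1|\geq\binom{x-1}{r-2}$ finishes via Pascal's rule $\binom{x-1}{r-1}+\binom{x-1}{r-2}=\binom{x}{r-1}$. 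No real-variable analysis is needed. I would either switch to that decomposition or supply a complete proof of your two-variable inequality.
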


The proof of Theorem \ref{blfullvertices} is somewhat analogous to that of Lemma \ref{edgelemma}, but is complicated slightly by the fact there may be some edges from $A$ to $B$, so we cannot directly apply flow theorems in $Q_n$.

\begin{proof}[Proof of Theorem \ref{blfullvertices}]
We let $F=\{xy\in E(Q_n): x\in A, y\in B\}$ and we will apply Menger's theorem in the graph $G=Q_n-F$. Writing $A_1$ for $\{x\in A:xy\in F \text{ for some } y \in B\}$, and similarly $B_1$ for $\{x\in B:xy\in F \text{ for some } y \in A\}$, it is clear that $|F|\geq \max (|A_1|,|B_1|).$ It is therefore sufficient to show that any set $C\subseteq Q_n$ separating $A$ from $B$ in $G$ has size at least $\min(b(|A|), b(|B|))-\max(|A_1|,|B_1|).$ 

Let $C$ be a subset of $Q_n$ that separates $A$ from $B$ in $G$. Let $A'$ be the union of the components of vertices of $A$ in the graph $G-C$, and define $B'$ to the the union of all other components, i.e. $B=V(Q_n)\setminus (A'\cup S)$. We may assume that $|A'|\leq |B'|$. Since $A'$ and $B'$ are disjoint, we get that $|A'|\leq 2^{n-1}$. If $n$ is even  and $|A'|\leq \sum_{i=0}^{n/2-1}\binom{n}{i}$ or if $n$ is odd, then $\partial_v (A')\geq b(|A|)$, by the monotonicity of $b$ up to this point. Since also $\partial_v (A')\subseteq S\cup B_1$,  we are done in this case.

In the other case, $n$ is even, $|A'|=\sum_{i=0}^{n/2-1} \binom{n}{i}+\alpha \binom{n}{n/2}$ and  $|B'|=\sum_{i=0}^{n/2-1} \binom{n}{i}+\beta \binom{n}{n/2}$, for some $0<\alpha, \beta <1$. Since $A'$ and $B'$ are disjoint, $\alpha\leq 1/2$.

Recall that  $|\partial_v A'|\geq |\partial_v I|$, where $I$ is an initial segment of the simplicial order, with $|I|=\sum_{i=0}^{n/2-1} \binom{n}{i}+\alpha \binom{n}{n/2}$. We write $I_0$ for $|I\cap [n]^{(n/2)}|$.It is easy to see that $|I_0|=\alpha\binom{n}{n/2}$ and that $\big|\partialv(I)\big|=(1-\alpha)\binom{n}{n/2}+\big|\partialv(I_0)\big|$.  We will show that $\big|\partialv I_0\big|\geq |I_0|$. Let $J_0=\{x^c: x\in I_0\}$, a subset of $[n]^{(n/2)}$. Note that $\big|\overleftarrow{\partial}_v(J_0)\big|=\big|\partialv I_0\big|$. Choose $x$ such that $|J_0|=\binom{x}{n/2}$. Since $\binom{n-1}{n/2}=\frac{1}{2}\binom{n}{n/2}\geq |J_0|$, we have that $x\leq n-1$.

Thus, by Theorem \ref{weak KK} we have:
\begin{align*}
\big|\overleftarrow{\partial}_v (J_0)\big| -|J_0|&\geq \binom{x}{n/2-1}-\binom{x}{n/2}\\
&\frac{x(x-1)\cdot \dots \cdot (x-n/2+2)}{(n/2)!}[ n/2-(x-n/2+1)]\\
&\geq 0.
\end{align*}

This implies that $\big|\partialv (I_0)\big|\geq |I_0|$, and so $|\partial A'|\geq \binom{n}{n/2}$. Since $B_1=B'\cap \partial A'$, we have that $|B_1|\geq |A'|+|\partial A'| +|B'|-2^n\geq (\alpha+\beta)\binom{n}{n/2}$. Since $|S|=(1-\alpha-\beta)\binom{n}{n/2}$ and $b(|A|)\leq \binom{n}{n/2}$, we are done.

\end{proof}

Note that the proof implies Observation \ref{vertexobs} with no extra work.

Essentially the same issue occurs in the proof of another theorem from \cite{bollobasleader}. For completeness, we state the theorem below and give an amended proof. 

The \emph{surface} of $S\subseteq \mathcal{P}[n]$, written $\sigma(S)$, is the set of vertices of $S$ adjacent to a vertex in $S^c$. In other words, $\sigma(S)=\{x\in S: \exists y\in S^c, d(x,y)=1\}$. The reader may notice a similarity to the definition of $\partial_v(S)$. Indeed, $\sigma(S)=\partial_v(S^c)$. 

We write $s(x)=s_n(x)=(1-\alpha)\binom{n}{k}+\alpha\binom{n}{k+1}$, where $x=\sum_{i=0}^k \binom{n}{k}+\alpha \binom{n}{k+1}$ for some $\alpha\in [0,1)$. The relationship between $\sigma$ and $\partial_v$ implies that $\sigma(A)\geq s(|A|)$, for all sets $A$.

Bollob\'as and Leader showed the following, essentially best possible, bound on the size of matchings between two complementary sets, in terms of the size of the smaller set.

\begin{theorem}[Bollob\'as-Leader \cite{bollobasleader}]\label{matchings}
Let $A$ be a subset of $Q_n$ with $|A|\leq 2^{n-1}$. Then there is a matching from $A$ to $A^c$ of size at least $s(|A|)$.
\end{theorem}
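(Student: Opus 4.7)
The plan is to reduce the matching problem to König's theorem and the vertex isoperimetric inequality. Consider the bipartite graph $H$ with parts $A$ and $A^c$ and edges the hypercube edges between them; the target is a matching of size $s(|A|)$ in $H$. By König's theorem it suffices to show that every vertex cover of $H$ has at least $s(|A|)$ vertices. Given such a cover $C$, write $C_A=C\cap A$, $C_B=C\cap A^c$, and set $A'=A\setminus C_A$. Since $C$ covers every $H$-edge incident to $A'$, we have $\partial_v(A')\cap A^c\subseteq C_B$, so
\[|C|\;\geq\;|A\setminus A'|\;+\;|\partial_v(A')\cap A^c|,\]
and the task becomes showing this is at least $s(|A|)$ for every $A'\subseteq A$.

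I would then split on $|A'|$. If $|A'|\leq|A|-s(|A|)$ the first term already exceeds $s(|A|)$; otherwise Theorem \ref{vertexiso} gives $|\partial_v(A')|\geq b(|A'|)$, and combining this with the trivial bound $|\partial_v(A')\cap A|\leq|A\setminus A'|$ yields $|C|\geq b(|A'|)$. So in this second range I only need $b(|A'|)\geq s(|A|)$. Writing $|A|=\sum_{i=0}^{k}\binom{n}{i}+\alpha\binom{n}{k+1}$, a short calculation gives $|A|-s(|A|)=\sum_{i=0}^{k-1}\binom{n}{i}+\alpha\binom{n}{k}$ and $b(|A|-s(|A|))=s(|A|)$. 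So as long as $b$ is non-decreasing on $[|A|-s(|A|),|A|]$ — which covers $n$ odd, and $n$ even with $|A|\leq\sum_{i=0}^{n/2-1}\binom{n}{i}$ — the bound follows from monotonicity alone.

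The main obstacle is the \emph{essentially same issue} flagged in the paper: when $n$ is even and $\sum_{i=0}^{n/2-1}\binom{n}{i}<|A|\leq 2^{n-1}$, the function $b$ peaks inside $(|A|-s(|A|),|A|]$ and then decreases, so monotonicity by itself fails. Here $k=n/2-1$, and the constraint $|A|\leq 2^{n-1}$ forces $\alpha\leq 1/2$. I would split the interval at the peak $\sum_{i=0}^{n/2-1}\binom{n}{i}$; on the decreasing piece one has $b(|A'|)\geq b(|A|)$, and the remaining inequality $b(|A|)\geq s(|A|)$ reduces, via $\binom{n}{n/2+1}=\binom{n}{n/2-1}$, to $(1-2\alpha)(\binom{n}{n/2}-\binom{n}{n/2-1})\geq 0$, which holds because $\alpha\leq 1/2$. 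If one prefers the paper's advertised slightly stronger lower bound on $\partial_v$ than $b(x)$, then applying the weak Kruskal--Katona inequality (Theorem \ref{weak KK}) to the middle-level part of $A'$ gives $|\partial_v(A')|\geq\binom{n}{n/2}\geq s(|A|)$ directly, in the same style as the amended proof of Theorem \ref{blfullvertices}.
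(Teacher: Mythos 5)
Your proposal is correct and follows essentially the same route as the paper: the paper invokes the defect form of Hall's theorem where you invoke K\"onig (these are equivalent dualities, and your set $A'$ plays exactly the role of the paper's defective set $B$), and both arguments then reduce to showing $b(|A'|)\geq s(|A|)$ via the identity $b(|A|-s(|A|))=s(|A|)$, monotonicity of $b$ in the generic range, and, in the even-$n$ case where $b$ peaks before $2^{n-1}$, the same cancellation using $\binom{n}{n/2+1}=\binom{n}{n/2-1}$ together with $\alpha\leq 1/2$. The only cosmetic difference is that you first push $|A'|$ to $|A|$ by monotonicity on the decreasing piece and then check $b(|A|)\geq s(|A|)$, whereas the paper computes $b(|B|)-s(|A|)$ directly with two parameters $\beta\leq\alpha$.
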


Again, the theorem was stated correctly, but in \cite{bollobasleader} it was incorrectly claimed that $b(x)$ is increasing up to $x=2^{n-1}$. Once more, the fix is a small part of the proof, the rest comes directly from \cite{bollobasleader}.

\begin{proof}[Proof of Theorem \ref{matchings}]

By the defect form of Hall's Marriage Theorem, there is a matching of size $s(|A|)$ if there is no $B\subset A$ with $|\partial B\cap A^c|< |B|-(|A|-s(|A|))$. Suppose such a $B$ existed. Then we must have that: $|B|\geq |A|-s(|A|)$.

If $|B|\leq \sum_{i=0}^{\lfloor n/2\rfloor}\binom{n}{i}$, then by monotonicity of $b$ up to this point, $b(|B|)\geq b(|A|-s(|A|)).$ Note that $b(|A|-s(|A|))=s(|A|)$, by definition of $b$ and $s$, so $b(|B|)\geq s(|A|)$.  Otherwise,  the assumption on the size of $|A|$ implies that $n$ is even and that $|A|=\sum_{i=0}^{ n/2}\binom{n}{i}+ \alpha \binom{n}{n/2}$ and $|B|=\sum_{i=0}^{ n/2}\binom{n}{i}+ \beta \binom{n}{n/2}$, for some $0<\beta\leq \alpha  \leq 1/2$. Thus,
 \[b(|B|)=(1-\beta)\binom{n}{n/2}+\beta\binom{n}{n/2+1}, \quad s(|A|)=(1- \alpha)\binom{n}{n/2-1}+ \alpha\binom{n}{n/2}.
\]

Therefore \[ b(|B|)-s(|A|)=(1-\beta -\alpha)\binom{n}{n/2}-(1-\beta-\alpha)\binom{n}{n/2+1}.\]

Hence in this case, we also have $b(|B|)\geq s(|A|)$. This implies that $|\partial_v B \cap A^c|\geq s(|A|) -|A\setminus B|$, concluding the proof.
\end{proof}

\section*{Acknowledgements}
The author would like to thank Robert Johnson and David Ellis for useful advice and helpful discussions relating to the topic and Imre Leader for helpful comments that greatly improved the presentation of this paper.

\end{document}